\documentclass{amsart}

\usepackage{amsrefs}
\usepackage{amscd}
\usepackage{amssymb}
\usepackage[cmtip,all]{xy}
\usepackage[mathscr]{eucal}
\usepackage{mathrsfs}
\usepackage{dsfont}


\newtheorem{theorem}{Theorem}[section]
\newtheorem{lemma}[theorem]{Lemma}
\newtheorem{proposition}[theorem]{Proposition}
\newtheorem{corollary}[theorem]{Corollary}

\newtheorem{definition}[theorem]{Definition}

\theoremstyle{remark}
\newtheorem{remark}[theorem]{Remark}

\numberwithin{equation}{section}


\newcommand{\cN}{\mathcal{N}}

\newcommand{\CC}{\mathbb{C}}

\newcommand{\cM}{\mathcal{M}}

\newcommand{\cO}{\mathcal{O}}

\newcommand{\PP}{\mathbb{P}}

\newcommand{\cV}{\mathcal{V}}

\DeclareMathOperator{\Ob}{Ob}
\DeclareMathOperator{\eu}{Eu}

\DeclareMathOperator{\Ch}{Ch}
\DeclareMathOperator{\rk}{rk}

\DeclareMathOperator{\Spec}{Spec}
\DeclareMathOperator{\Grass}{Grass}
\DeclareMathOperator{\mult}{mult}
\newcommand{\vir}{\text{\rm vir}}
\newcommand{\cX}{\mathcal{X}}
\newcommand{\cU}{\mathcal{U}}
\newcommand{\cD}{\mathcal{D}}

\newcommand{\cW}{\mathcal{W}}
\newcommand{\cZ}{\mathcal{Z}}
\newcommand{\cY}{\mathcal{Y}}

\newcommand{\ZZ}{\mathbb{Z}}

\newcommand{\G}{{\mathbb G}}

\newcommand{\QQ}{{\mathbb Q}}



\begin{document}

\title{Pro-Chern-Schwartz-MacPherson class for DM stacks}

\author[Jiang]{Yunfeng Jiang}
\address{Department of Mathematics\\
University of Kansas\\
405 Snow Hall\\
1460 Jayhawk Blvd\\
Lawrence 66045 
\\USA}
\email{y.jiang@ku.edu}

\thanks{}
  
\subjclass[2010]{Primary 14N35; Secondary 14A20}

\keywords{}

\date{}

\begin{abstract}
We generalize the definition of Pro-Chern-Schwartz-MacPherson (Pro-CSM) class of Aluffi for schemes to not necessarily proper DM stacks. The Pro-CSM class of  constructible functions on a DM stack $\cX$ can be similarly defined. 
In the case that $\cX$ is proper, the Pro-CSM class of the Behrend function of $\cX$ is the same as the  Chern-Schwartz-MacPherson (CSM) class for the Behrend function. The integration of this class over $\cX$ gives rise to the weighted Euler characteristic corresponding to the Behrend function, thus proving a conjecture of Behrend. 

\end{abstract}

\maketitle

\section{Introduction}

The Chern-Schwartz-MacPherson (CSM) class for a singular algebraic variety is a generalization of Chern class of smooth varieties. In Section 3 of \cite{MacPherson} MacPherson introduced the local Euler obstruction using Nash blow-up for cycles of (singular) algebraic varieties.  The corresponding characteristic class of the local Euler obstruction is the Chern-Mather class defined by pushforward of the cap product of  Chern class of the Nash tangent bundle with the fundamental class of the Nash blow-up. 
 It turns out that as constructible functions, local Euler obstructions form a basis for the group of constructible functions for an algebraic variety $X$. Hence the constant function $\mathds{1}_{X}$ can be written as a linear combination of local Euler obstructions. Apply the Chern-Mather construction for such a combination one gets CSM class for $X$.

The Chern-Mather class and CSM class for a constructible function  has recently become important in Donaldson-Thomas theory as defined by R. Thomas \cite{Thomas}.  In \cite{Behrend},  Behrend  found that if a proper scheme $X$ admits a symmetric obstruction theory, the virtual count, i.e. the integration of $1$ over the zero dimensional virtual fundamental class 
$[X]^\vir$, is the weighted Euler characteristic $\chi(X, \nu_{X})$. The weight is given by  Behrend function $\nu_X$, which is an integer valued constructible function on $X$.  The Behrend function $\nu_X$ is defined as the Euler obstruction of a canonical cycle $\mathfrak{c}_X\in Z_*(X)$ determined by the scheme $X$.   Applying the Chen-Mather construction one gets a class $\alpha_X\in A_*(X)$ in the Chow group $X$, which Behrend calls Aluffi class.  Behrend also proves in Theorem 1.12 of \cite{Behrend} that the integration of the class $\alpha_X$ over $X$ gives the weighted Euler characteristic $\chi(X, \nu_{X})$  if $X$ is a proper scheme.  Actually Behrend's construction works for any DM stack $\cX$, and he conjectured that for a proper DM stack  $\cX$, the integration of the class $\alpha_{\cX}$ over $\cX$ gives the weighted Euler characteristic $\chi(\cX, \nu_{\cX})$.

There is another method, see Aluffi \cite{Aluffi}, to construct CSM class.  Aluffi's construction actually works for any schemes, not necessarily proper. More precisely Aluffi constructs Pro-CSM class  for non proper schemes, and proves that it is the same as the usual  CSM class if the scheme is proper.    Aluffi also defines a degree map from the  Pro-CSM class  of a scheme to $\mathbb{Z}$ such that the degree is the weighted Euler characteristic.  Aluffi  constructs in the same paper a natural transformation $F\leadsto  \hat{A}_*$ from the functor of  constructible functions from category of schemes to abelian groups. 
In this paper we generalize Aluffi's construction of  Pro-CSM class  to DM stacks.  If a DM stack $\cX$ is proper, the  Pro-CSM class  of $\cX$ is the same as  the CSM class  of $\cX$.
The  Pro-CSM class  of a constructible function on $\cX$ can be similarly defined. In particular, the  Pro-CSM class  of Behrend function $\nu_{\cX}$ of $\cX$ is defined.   In the case that $\cX$ is proper, the  Pro-CSM class  of Behrend function $\nu_{\cX}$ is the same as Aluffi class defined in Section 1.4 of \cite{Behrend}.  Hence the integration of it over $\cX$ gives the weighted Euler characteristic $\chi(\cX, \nu_{\cX})$, proving the conjecture of Behrend. 
Note that this question  has been addressed by Maulik and Treumann in \cite{MT} by generalizing Kashiwara's index theorem for Lagrangian intersection from schemes to orbifolds.

The plan of this paper is as follows.  We define  Pro-CSM group  for DM stacks in Section \ref{Pro_Chow_group}  and   Pro-CSM class in Section \ref{Pro_Chow_class}.   
In Section \ref{natural_transformation_functor}  we prove that there is a natural transformation of functors from the category of DM stacks to abelian groups.   We discuss Behrend function and prove the main theorem in  Section \ref{Behrend_function}.  

\subsection*{Convention:} We work over complex number $\CC$, and take $\QQ$ coefficients for Pro-Chow groups and Chow groups.

\subsection*{Acknowledgement}

The author would like to thank Professors Paolo Aluffi, Kai Behrend, Jun Li and Richard Thomas for their encouragement and valuable discussions.  Thanks to Andrew Kresch and Andrei Mustata for email correspondence about weighted blow-ups. 
The author sincerely thanks the referee for carefully reading the paper and  for many valuable comments about polishing the paper. 
This paper is motivated by a joint work with Richard Thomas about virtual signed Euler characteristics.   
This work is partially supported by NFGRF, University of Kansas, and a Simons Foundation Collaboration Grant 311837.

\section{Pro-Chow group}\label{Pro_Chow_group}

\subsection{}

Let $\cX$ be a DM stack which is separated and finite type  over complex number $\CC$, with quasi-projective coarse moduli space.  We denote by $A_*(\cX)$  the Chow group of $\cX$ with $\QQ$-coefficients in the sense of Vistoli
\cite{Vistoli}.  Let $\cX\to X$ be the canonical map to its coarse moduli space. 
Then there is an isomorphism 
$$A_*(\cX)\cong A_*(X).$$
From Vistoli \cite{Vistoli}, $A_*$ is a functor from category of DM stacks to abelian groups, covariant with proper maps of DM stacks.

\subsection{}

Similar to Aluffi \cite{Aluffi}, we define a Pro-Chow functor $\hat{A}_*(\cdot)$ from DM stacks to abelian groups, covariant with 
respect to regular maps for DM stacks.

Let $\cU$ be a DM stack (maybe not proper). Denote $\mathfrak{U}$ to be the category of maps
$$i: \cU\to \cX_i$$
such that $\cX_i$ is a proper DM stack over $\CC$. A morphism between 
$i: \cU\to \cX_i$ and 
$j: \cU\to \cX_j$
is given by a commutative diagram:
\[
\xymatrix{
\cU\ar[r]^{i}\ar[dr]_{j}& \cX_i\ar[d]^{\pi}\\
&\cX_j
}
\]
where $\pi$ is a proper morphism of DM stacks. 

\begin{definition}
An embedding $i: \cU\to \overline{\cU}$ is a \textbf{closure} if $\overline{\cU}$ is complete and $\cU$ is open and dense in 
$\overline{\cU}$.
\end{definition}

\begin{lemma}
Closures of DM stacks form a small cofinal subcategory $\overline{\mathfrak{U}}$ of $\mathfrak{U}$. 
\end{lemma}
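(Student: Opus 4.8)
The plan is to establish three facts in turn: that the closures of $\cU$ span a subcategory $\overline{\mathfrak{U}}$ of $\mathfrak{U}$, that this subcategory is essentially small, and---the substantive part---that it is cofinal, so that limits over $\mathfrak{U}$ may be computed over $\overline{\mathfrak{U}}$ alone. Concretely, cofinality means that for every object of $\mathfrak{U}$ the comma category of closures over it is nonempty and connected. The first fact is immediate: a closure $i\colon\cU\to\overline{\cU}$ is in particular an object of $\mathfrak{U}$, and I take $\overline{\mathfrak{U}}$ to be the full subcategory on these objects, with the morphisms (proper $\pi$) inherited from $\mathfrak{U}$; closure under composition is automatic. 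For smallness I would argue exactly as in the scheme case of Aluffi \cite{Aluffi}: a closure is determined up to isomorphism by bounded data, since its coarse space is a complete variety containing the quasi-projective coarse space $U$ of $\cU$ as a dense open---of which there is only a set up to isomorphism---and the stack structure over a fixed coarse space is likewise bounded. Hence $\overline{\mathfrak{U}}$ is essentially small.

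Before anything else I need at least one closure to exist, and this is the main external input: I invoke a Nagata-type compactification theorem for separated finite-type DM stacks with quasi-projective coarse space, producing a dense open immersion $\iota\colon\cU\hookrightarrow\overline{\cU}$ into a proper DM stack $\overline{\cU}$. I regard securing and citing this compactification result, and checking that its hypotheses are met in our setting, as the principal obstacle; the remaining steps are comparatively formal.

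For cofinality I use a graph-closure construction. Given an arbitrary object $i\colon\cU\to\cX_i$ of $\mathfrak{U}$, consider the map $(\iota,i)\colon\cU\to\overline{\cU}\times\cX_i$. Since $\cX_i$ is separated, the graph $\Gamma$ of $i$ is closed in $\cU\times\cX_i$, and $\cU\times\cX_i$ is open in $\overline{\cU}\times\cX_i$ because $\cU$ is open in $\overline{\cU}$; therefore the stack-theoretic closure $\cW:=\overline{\Gamma}$ in the proper stack $\overline{\cU}\times\cX_i$ contains $\Gamma\cong\cU$ as an open dense substack, and $\cW$ is complete, being closed in a proper stack. Thus $\cW$ is a closure, the second projection restricts to a proper morphism $\cW\to\cX_i$ that recovers $i$ on $\cU$, and this is exactly a morphism in $\mathfrak{U}$ from the closure $\cW$ to $\cX_i$, establishing nonemptiness. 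For connectedness, given two closures $\overline{\cU}_1,\overline{\cU}_2$ equipped with proper maps to $\cX_i$, I apply the same construction to $\cU\to\overline{\cU}_1\times\overline{\cU}_2$ to obtain a closure $\cW$ dominating both compatibly over $\cX_i$; since any two objects of the comma category are thereby joined through a common dominator, the comma category is connected (indeed cofiltered).

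Assembling these, $\overline{\mathfrak{U}}$ is a small subcategory of $\mathfrak{U}$ whose comma categories over objects of $\mathfrak{U}$ are nonempty and connected, which is precisely the required cofinality. The one genuinely delicate point beyond the compactification input is verifying in the stacky setting that the stack-theoretic closure of the graph behaves as in the scheme case, namely that $\cU$ sits as a dense open substack of $\cW$; this reduces to the separatedness of $\cX_i$ and the openness of $\cU$ in $\overline{\cU}$, both of which hold by hypothesis.
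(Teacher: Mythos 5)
The smallness argument and the appeal to Rydh's Nagata-type compactification for the existence of at least one closure are both fine, and match the paper's inputs. The gap is in the graph-closure step, which is exactly the point where the stacky setting differs from Aluffi's scheme argument. For a morphism $i\colon \cU\to\cX_i$ of DM stacks, the graph morphism $(\mathrm{id},i)\colon \cU\to\cU\times\cX_i$ is the base change of the diagonal $\Delta_{\cX_i}$; separatedness of $\cX_i$ makes this finite and unramified, but it is a closed immersion only when $\cX_i$ is an algebraic space. In general it is not even a monomorphism: on stabilizer groups it induces $\mathrm{Aut}(u)\to\mathrm{Aut}(u)\times\mathrm{Aut}(i(u))$, $g\mapsto(g,i_*g)$, which is not full. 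Consequently the closed substack ``image of the graph'' in $\cU\times\cX_i$ carries the ambient stabilizers $\mathrm{Aut}(u)\times\mathrm{Aut}(i(u))$, and the map from $\cU$ to it is a nontrivial finite surjection rather than an isomorphism. Concretely, take $\cU=\cX_i=BG$ and $i=\mathrm{id}$: your $\cW$ is the closure of the image of $\Delta\colon BG\to B(G\times G)$, namely $B(G\times G)$ itself, and $BG\to B(G\times G)$ is not an open immersion. So the $\cW$ you build is not a closure of $\cU$, and the same defect propagates into your connectedness argument, which uses the identical construction. Your closing remark that the delicate point ``reduces to the separatedness of $\cX_i$'' is precisely what fails: separatedness gives properness of the diagonal, not that it is a closed immersion.

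This is why the paper does not take the graph route. It fixes one closure $j\colon\cU\to\overline{\cU}$ (Rydh) and resolves the induced rational map $\overline{\cU}\dashrightarrow\cX_i$ by a \emph{stacky/weighted blow-up} $\hat{\cU}\to\overline{\cU}$ --- a combination of root-stack constructions on divisors and ordinary blow-ups --- with center contained in the boundary $\overline{\cU}\setminus\cU$. Since that modification is an isomorphism over $\cU$, the stack $\hat{\cU}$ is again a closure of $\cU$, and it comes with a proper morphism $\hat{\cU}\to\cX_i$ extending $i$; this yields both nonemptiness and (applied to pairs) connectedness of the relevant comma categories. To repair your argument you would need either to substitute this resolution-of-indeterminacy step, or to supplement the graph closure with a further construction (e.g.\ a relative normalization of $\cW$ in $\cU$) that restores the correct stabilizers --- neither of which is present in what you wrote.
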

\begin{proof}
The proof is similar to Aluffi  Lemma 2.1 of \cite{Aluffi}. The only difference is that we should use stacky or weighted blow-up for DM stacks. We give a brief review below.

Let $i: \cU\to \cX_i$ be a morphism from a DM stack $\cU$ to a proper DM stack
$\cX$.  In \cite{Rydh}, Rydh generalizes Nagata's result of compactification theorem for schemes to DM stacks. 
So if $j: \cU\to \overline{\cU}$ is a fixed closure, then one can construct the following diagram:
\[
\xymatrix{
&\hat{\cU}\ar[d]\ar[dr]^{\hat{i}}&\\
\cU\ar@{^{(}->}[r]^{j}\ar@{^{(}->}[ur]^{\hat{j}}\ar@/_2pc/[rr]_{i}&\overline{\cU}\ar@{-->}[r]^{\overline{i}}&\cX_i, 
}
\]
where $\hat{i}: \hat{\cU}\to \cX_i$ is a proper morphism constructed by weighted blow-up along the complement of $\cU$, and $\hat{j}: \cU\hookrightarrow \hat{\cU}$ is also a closure.  As in Section 8 of \cite{Rydh}, the weighted blow-up is a process combining root stack construction on divisors and ordinary blow-ups. A root stack construction, roughly spewking, is a method on how to put stacky structure on divisors. This also gives a reason why ordinary blow-ups are not enough for the construction for 
DM stacks. Then $\hat{i}$ can be taken as a morphism from $\hat{j}: \cU\to \hat{\cU}$ to 
$i: \cU\to \cX_i$.  So the closures are cofinal in $\mathfrak{U}$.
\end{proof}
\begin{remark}
Taking coarse moduli spaces of the above diagram gives rise to the diagram in the proof of Lemma 2.1 in \cite{Aluffi}. 
\end{remark}

\begin{definition}
The Pro-Chow group of $\cU$ is defined as the inverse limit of the system:
$$\hat{A}_*(\cU)=\varprojlim_{\cX_i\in \Ob(\overline{\mathfrak{U}})}A_*(\cX_i).$$
\end{definition}

\begin{remark}
From the definition, a class $\alpha\in \hat{A}_*(\cU)\cong \hat{A}_*(U)$ is given by the choice of a class $\alpha_i\in A_*(\cX_i)$
for any proper DM stack $\cX_i$ so that $i: \cU\to \cX_i$
is compatible with proper pushforward. 
\end{remark}

\subsection{}\label{pushforward_ProChow}

Let $f: \cU\to \cV$ be a morphism of DM stacks.  Any assignment $\cV\to \cX_i$ induces a morphism  $\cU\to \cV\to\cX_i$. 
So a compatible class in $\hat{A}_*(\cU)$ defines a class in $\hat{A}_*(\cV)$. So there is a pushforward
$$f_*: \hat{A}_*(\cU)\to \hat{A}_*(\cV)$$
and 
$$(f\circ g)_*=f_*\circ g_*,$$
if $g: \cV\to \cW$ is another morphism. 

To define a global pro-Chow class on a DM stack $\cX$, we need to have the so called \textbf{good local data} defined in 
Section 3 of \cite{Aluffi}.  Let $\cU$ be a nonsingular DM stack, and let 
$$i: \cU\to \overline{\cU}$$
be a closure of $\cU$.  
Consider the diagram:
\begin{equation}\label{diagram:coarse:moduli}
\xymatrix{
\cU\ar[r]^{}\ar[d]_{}& \overline{\cU}\ar[d]^{}\\
U\ar[r]&\overline{U}
}
\end{equation}
where $U$ and $\overline{U}$ are the coarse moduli spaces of $\cU$ and $\overline{\cU}$, respectively. 

We say $i$ is a \textbf{good closure} of $\cU$ if 
$\overline{i}: U\to \overline{U}$
 is a good closure on the coarse moduli spaces in the sense of Section 2.4 of \cite{Aluffi}, i.e. $\overline{U}\setminus U$ consists of simple normal crossing divisors and $\overline{\cU}$ is nonsingular. 

\begin{remark}
In the Diagram (\ref{diagram:coarse:moduli}), 
the DM stack $\overline{\cU}$ can be constructed from $\overline{U}$ by root constructions, see Example 2.4.5 in \cite{Cadman}. 
\end{remark}

The following result is a generalization of Proposition 2.5 of \cite{Aluffi}. 
\begin{proposition}
\begin{enumerate}
\item 
Defining a class $\alpha\in \hat{A}_*(\cU)$ is equivalent to assigning 
$\alpha_i\in A_*(\cX_i)$ for any good closure $i: \cU\to \cX_i$ satisfying
\[
\xymatrix{
\cU\ar[r]^{}\ar[dr]_{}& \cX_i\ar[d]^{\pi}\\
&\cX_j
}
\]
where $\pi$ is a (stacky or) weighted blow-up of $\cX_j$, and 
$\alpha_j=\pi_*(\alpha_i)$;
\item  Two elements $\alpha=\beta\in \hat{A}_*(\cU)$ if and only if $\alpha_i=\beta_i$ for all good closures $i$. 
\end{enumerate}
\end{proposition}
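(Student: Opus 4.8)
The plan is to reduce both statements to the cofinality of good closures inside the category $\overline{\mathfrak{U}}$ of all closures, following the scheme-theoretic argument of Aluffi's Proposition 2.5 but with the blow-up steps replaced by the weighted blow-up and root-stack constructions supplied by the lemma proved above. Once good closures are shown to be cofinal in $\overline{\mathfrak{U}}$, the inverse limit
\[
\hat{A}_*(\cU)=\varprojlim_{\cX_i\in\Ob(\overline{\mathfrak{U}})}A_*(\cX_i)
\]
may be computed over the subcategory of good closures alone, since limits are unchanged when passing to a cofinal subsystem. Both (1) and (2) then become formal consequences of this identification.

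First I would establish cofinality of good closures. Given an arbitrary closure $i:\cU\to\cX_i$ with coarse moduli space $\overline{U}$, I pass to the coarse-space diagram (\ref{diagram:coarse:moduli}) and apply Hironaka resolution together with embedded resolution of $\overline{U}\setminus U$ to produce a nonsingular $\overline{U}'$ in which the boundary is a simple normal crossing divisor, equipped with a proper birational map $\overline{U}'\to\overline{U}$ that is an isomorphism over $U$; this is exactly Aluffi's good-closure construction on the coarse space (Section 2.4 of \cite{Aluffi}). I then reconstruct the stacky structure over $\overline{U}'$ by the root construction of Cadman (Example 2.4.5 of \cite{Cadman}), obtaining a nonsingular DM stack $\cX_{i'}$ that is a good closure of $\cU$ together with a proper morphism $\pi:\cX_{i'}\to\cX_i$ compatible with the embeddings of $\cU$. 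By the lemma above this $\pi$ is realized through weighted blow-ups, combining root constructions on divisors with ordinary blow-ups, so $i'$ is a genuine object of $\overline{\mathfrak{U}}$ dominating $i$. This yields cofinality and hence $\hat{A}_*(\cU)\cong\varprojlim_{\text{good }i}A_*(\cX_i)$.

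For part (1) it remains to verify that among good closures a compatible assignment $\{\alpha_i\}$ satisfying $\alpha_j=\pi_*(\alpha_i)$ for weighted blow-ups $\pi$ is the same datum as a single element of the limit. The point is that any two good closures admit a common dominating good closure obtained from each by weighted blow-ups: I would first dominate both by a common proper stack using Rydh's compactification theorem \cite{Rydh} applied to the interior map, then apply resolution and root constructions as above, and finally use the description of weighted blow-ups in Section 8 of \cite{Rydh} to factor the connecting maps. This makes the system of good closures cofiltered with weighted blow-ups as transition maps, so the stated compatibility condition captures exactly the inverse-limit relation. Part (2) is then immediate, since cofinality of good closures forces two elements of the limit to coincide precisely when their components $\alpha_i$ agree for every good closure $i$. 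The main obstacle I anticipate is this factorization step, namely the stacky analogue of weak factorization: verifying that any dominating morphism between good closures can be expressed through weighted blow-up and root-stack operations so that checking compatibility on weighted blow-ups alone suffices. The scheme-level statement is classical, while the stacky refinement rests on controlling how the root constructions interact with blow-ups supported on the boundary.
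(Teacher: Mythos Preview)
Your approach is essentially the same as the paper's: reduce to Aluffi's Proposition~2.5 on the coarse moduli space via the Vistoli isomorphism $A_*(\cX_i)\cong A_*(X_i)$, and then handle the stacky part by invoking weak factorization. The paper is more terse than you are---it leans directly on the coarse-space isomorphism rather than rebuilding good closures via Hironaka plus root constructions---but the logical skeleton is identical.

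The obstacle you flag at the end, namely the stacky analogue of weak factorization needed to ensure that compatibility under weighted blow-ups suffices, is exactly the point where the paper inputs an external result: it cites the functorial destackification and weak factorization of orbifolds due to Bergh and Bergh--Rydh \cite{Bergh}, \cite{BerghRydh}, which extends the AKMW theorem \cite{AKMW} to DM stacks. So your anticipated difficulty is real, and the resolution is to invoke that reference rather than to attempt a direct argument about how root constructions interact with blow-ups.
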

\begin{proof}
Let $\pi: \cU\to U$ be the canonical map to its coarse moduli space. 
Then the result is true for the scheme $U$, see Proposition 2.5 in \cite{Aluffi}. Since 
$$\pi_*: A_*(\cU)\stackrel{\cong}{\longrightarrow} A_*(U)$$
is an isomorphism, from diagram $(1)$ and 
$$\pi_*: A_*(\cX_i)\stackrel{\cong}{\longrightarrow} A_*(X_i)$$
a class in $A_*(U)$ determines a class in $\cU$.

Then the result just follows from the weak factorization of birational maps of DM stacks due to D. Bergh and D. Rydh \cite{Bergh},  \cite{BerghRydh}, which generalizes the weak factorization theorem of AKMW \cite{AKMW} to orbifolds. 
\end{proof}

\section{Pro-Chow class of DM stacks}\label{Pro_Chow_class}

\subsection{}

Let $\cU$ be a nonsingular DM stack. Let $i: \cU\to \overline{\cU}$ be a good closure so that $\cD=\overline{\cU}\setminus \cU$ is a divisor with simple normal crossing. 

\begin{definition} \label{Defn_Chern_Log}
Define
$$c_{\cU}^{\overline{\cU}}:=c(\Omega_{\overline{\cU}}^1(\log \cD)^{\vee})\cap [\overline{\cU}]\in A_*(\overline{\cU}).$$
Here $\Omega_{\overline{\cU}}^1(\log \cD)$ denotes the bundle of differential one forms with logarithmic poles along 
$\cD$. 
\end{definition}

\begin{definition}
A \textbf{good local data} for the DM stack $\cX$ is given by:
\begin{enumerate}
\item 
A decomposition
$$\cX=\bigcup_{\alpha}\cU_{\alpha},$$ 
where $\cX$ is the disjoint union of the DM stacks $\cU_\alpha$, and $\cU_\alpha$ is nonsingular and irreducible. 
\item A class $\{\cU_\alpha\}\in \hat{A}_*(\cU_\alpha)$ for all 
$\alpha$ such that the class $\{\cX\}:=\sum_{\cU_\alpha}i_{\cU_{\alpha}*}\{\cU_\alpha\}$ is well-defined. 
\end{enumerate}
\end{definition}

\begin{remark}
That the class $\{\cX\}$ is well-defined means that the class $\{\cX\}$ is independent of the decomposition of $\cX$ into disjoint union of open substacks. 
\end{remark}

\subsection{}

In this section we prove the following result:

\begin{proposition}\label{good_local_data}
The data $\{c_{\cU}^{\overline{\cU}}\}$ defined in Definition \ref{Defn_Chern_Log} gives a good local data for the DM stack $\cX$. 
\end{proposition}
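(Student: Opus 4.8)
The plan is to verify directly the two conditions in the definition of good local data: first, that for each nonsingular irreducible stratum $\cU=\cU_\alpha$ the collection of log Chern classes $c_{\cU}^{\overline{\cU}}$, indexed by good closures, determines a well-defined element of $\hat{A}_*(\cU)$; and second, that the resulting total class $\{\cX\}=\sum_\alpha i_{\cU_\alpha *}\,c_{\cU_\alpha}^{\overline{\cU_\alpha}}$ is independent of the chosen stratification. Throughout, the strategy is to transport the problem to the coarse moduli spaces, where the corresponding statements for schemes are exactly the content of Aluffi's construction in \cite{Aluffi}, and then lift the conclusions back via the isomorphisms $A_*(\overline{\cU})\cong A_*(\overline{U})$ of Section \ref{Pro_Chow_group}.

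The first reduction I would establish is that the stacky log Chern class agrees with the coarse one. In a good closure $i\colon\cU\to\overline{\cU}$ the boundary $\cD=\overline{\cU}\setminus\cU$ is an SNC divisor and, by the Remark following Diagram (\ref{diagram:coarse:moduli}), $\overline{\cU}$ is obtained from its coarse space $\overline{U}$ by iterated root constructions along the components of the boundary (Example 2.4.5 of \cite{Cadman}). Writing $p\colon\overline{\cU}\to\overline{U}$ for the coarse map and $D=\overline{U}\setminus U$, a local computation in the chart $[\mathbb{A}^1/\mu_r]$, where a coordinate $s$ on the root stack satisfies $s^r=t$ for a coordinate $t$ on $\overline{U}$, shows that $dt/t=r\,ds/s$; hence $\Omega_{\overline{\cU}}^1(\log\cD)\cong p^*\Omega_{\overline{U}}^1(\log D)$. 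Consequently $c_{\cU}^{\overline{\cU}}=p^*\big(c(\Omega_{\overline{U}}^1(\log D)^{\vee})\big)\cap[\overline{\cU}]$ corresponds, under $p_*\colon A_*(\overline{\cU})\xrightarrow{\cong}A_*(\overline{U})$, precisely to the scheme-theoretic class $c_U^{\overline{U}}$ of Aluffi.

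Granting this identification, condition (1) follows. By the preceding Proposition it suffices to check compatibility for two good closures $\overline{\cU}_i$, $\overline{\cU}_j$ related by a weighted blow-up $\pi\colon\overline{\cU}_i\to\overline{\cU}_j$ that restricts to the identity on $\cU$, namely $\pi_*\,c_{\cU}^{\overline{\cU}_i}=c_{\cU}^{\overline{\cU}_j}$. Passing to coarse spaces by the identification above, this becomes the statement that the coarse log Chern classes push forward correctly under the induced map of schemes; since $c_U^{\overline{U}}$ is precisely the CSM class of the constant function $\mathds{1}_U$ computed on the good closure, this is exactly the blow-up compatibility proved in \cite{Aluffi}, using that weighted blow-ups combine root constructions with ordinary blow-ups (Section 8 of \cite{Rydh}), both operations being covered by the coarse reduction. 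The independence of the total class from the stratification, condition (2), follows in the same spirit: on coarse spaces it is the additivity $c_{SM}(\mathds{1}_{X})=\sum_\alpha c_{SM}(\mathds{1}_{U_\alpha})$ of CSM classes under a common refinement of two decompositions, which is inclusion-exclusion for the constructible function $\mathds{1}$ and is established in \cite{Aluffi}.

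The main obstacle is the first reduction step: verifying cleanly that the log cotangent sheaf on the stack is the pullback of the log cotangent sheaf on the coarse space through a general weighted blow-up, not merely a single root construction along a smooth divisor. This requires checking the identification $\Omega_{\overline{\cU}}^1(\log\cD)\cong p^*\Omega_{\overline{U}}^1(\log D)$ compatibly with the interleaving of root stacks and ordinary blow-ups in Rydh's construction, and confirming that the factor $r$ appearing in $dt/t=r\,ds/s$ does not affect the Chern class, since it only rescales a generating section of the same line bundle. Once this is in place, both conditions of the definition are inherited from the scheme case via the Chow isomorphism.
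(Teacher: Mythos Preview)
Your reduction to the coarse moduli space is a genuine gap, and the step you yourself flag as the ``main obstacle'' actually fails. The identification $\Omega_{\overline{\cU}}^1(\log\cD)\cong p^*\Omega_{\overline{U}}^1(\log D)$ holds only when \emph{all} of the stacky structure of $\overline{\cU}$ lies along the boundary divisor $\cD$. But a nonsingular DM stack $\cU$ can carry interior stacky structure, and the Remark following Diagram~(\ref{diagram:coarse:moduli}) does not say the root constructions occur along the boundary---you have added that hypothesis. Concretely, take $\cU=\overline{\cU}=[\PP^1/\mu_2]$ with $\mu_2$ acting by $z\mapsto -z$; the boundary is empty, the coarse space is $\PP^1$, and one computes
\[
\int_{\overline{\cU}} c(T_{\overline{\cU}})\cap[\overline{\cU}]=1,
\qquad
\int_{\overline{U}} c(T_{\overline{U}})\cap[\overline{U}]=2,
\]
so $p_*c_{\cU}^{\overline{\cU}}\neq c_U^{\overline{U}}$. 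Near an interior stacky divisor cut out by $s$ with $s^r=t$, the sheaf $\Omega_{\overline{\cU}}^1$ is generated by $ds$, whereas $p^*\Omega_{\overline{U}}^1$ is generated by $dt=rs^{r-1}\,ds$; without a log pole along that divisor these differ as line bundles, not merely by a scalar on a section. This is not a technicality: the whole point of the stacky Pro-CSM class is that it records the orbifold Euler characteristic, which differs from the coarse one.

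The paper therefore does \emph{not} reduce to Aluffi's scheme result. Instead it works directly on the stack, proving the additivity formula $\{\cU\}=z_*\{\cZ\}+i_*\{\cU\setminus\cZ\}$ via an explicit weighted blow-up computation (Lemma~\ref{Key_lemma}). The inputs are stacky analogues of Aluffi's two blow-up identities, namely
\[
\pi_*(c(T_{\overline{\cV}})\cap[\overline{\cV}])=c(T_{\overline{\cU}})\cap[\overline{\cU}]+(d-1)\,c(T_{\cW})\cap[\cW],
\qquad
\rho_*(c(T_F)\cap[F])=d\,c(T_{\cW})\cap[\cW],
\]
with $d$ the Euler characteristic of the fibre \emph{weighted} projective space; these are obtained from the Chern-class formulas for weighted blow-ups in \cite{MM}. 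Independence of the stratification then follows from this decomposition formula by the usual common-refinement argument. To repair your approach you would have to prove exactly these stacky pushforward identities, which is what the paper does.
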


We make the following set up: there is a diagram
\begin{equation}\label{key_diagram_computation}
\xymatrix{
\cV\ar[rr]\ar[dd]_(.7){} & & \overline{\cV}\ar'[d][dd]^{\pi} &\\
& E\ar[rr]^(.3){}\ar[dd]^(.7){}\ar[ul] & & F\ar[dd]^(.7){\rho}\ar[ul]\\
\cU\ar'[r]^(.7){}[rr] & & \overline{\cU} &\\
& \cZ\ar[ul]\ar[rr]^{} & & \cW\ar[ul]_{w},
}
\end{equation}
where 
\begin{enumerate}
\item  $\cW$ is a nonsingular closed irreducible substack of $\overline{\cU}$ meeting $\cD$ with normal crossing.
\item $\cZ=\cW\cap \cU$. If $\cZ\neq \emptyset$, $\cZ\to \cW$ is a good closure.
\item $\pi: \overline{\cV}\to \overline{\cU}$ is the weighted blow-up along $\cW$.
\item $F:$ exceptional divisor $\pi^{-1}(\cW)$.
\item  $E=\pi^{-1}(\cZ)=F\cap \cV$, so that $E\to F$ is a good closure. 
\end{enumerate}

\begin{lemma}\label{Key_lemma}
$$c_{\cU}^{\overline{\cU}}=\pi_*c^{\overline{\cV}}_{\cV\setminus E}+w_*c_{\cZ}^{\cW}.$$
\end{lemma}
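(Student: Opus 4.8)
The asserted identity is the inclusion-exclusion relation for the logarithmic Chern data under the decomposition $\cU = (\cU\setminus\cZ)\sqcup\cZ$, reduced to a single (weighted) blow-up; this is precisely what is needed, together with the weak factorization invoked in Proposition \ref{good_local_data}, to show that $\{\cX\}$ is independent of the chosen decomposition. The plan is to follow Aluffi's proof of the corresponding scheme statement, replacing the ordinary blow-up by the weighted blow-up $\pi$ and tracking the root-stack structure. First I would identify the three boundary pairs explicitly. Since $\pi$ is an isomorphism over $\overline{\cU}\setminus\cW$, it restricts to an isomorphism $\cV\setminus E \cong \cU\setminus\cZ$, and the complement of $\cV\setminus E$ in its closure $\overline{\cV}$ is the total transform $\widetilde{\cD} + F$ (strict transform of $\cD$ together with the full exceptional divisor). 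Hence
$$c^{\overline{\cV}}_{\cV\setminus E} = c\big(\Omega^1_{\overline{\cV}}(\log(\widetilde{\cD} + F))^{\vee}\big)\cap[\overline{\cV}],$$
while the last term uses $c^{\cW}_{\cZ} = c\big(\Omega^1_{\cW}(\log(\cW\cap\cD))^{\vee}\big)\cap[\cW]$, the logarithmic Chern class of the center with its induced boundary $\cW\setminus\cZ = \cW\cap\cD$. The hypothesis that $\cW$ meets $\cD$ with normal crossings guarantees that the pair $(\overline{\cV},\widetilde{\cD} + F)$ is again log smooth, so all three classes are defined.

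The core is a comparison of logarithmic differentials on $\overline{\cV}$. By a local toroidal computation on the charts of the weighted blow-up (which persist on the root stack), I would establish a natural exact sequence
$$0\to \pi^*\Omega^1_{\overline{\cU}}(\log\cD)\to \Omega^1_{\overline{\cV}}(\log(\widetilde{\cD} + F))\to \mathcal{Q}\to 0,$$
whose cokernel $\mathcal{Q}$ is supported on the exceptional divisor $F$ and, fibrewise over $\rho: F\to\cW$, is built from the relative (logarithmic) differentials of the weighted projective bundle together with the residue line $\cO_F$ along $F$. Dualizing, taking total Chern classes, capping with $[\overline{\cV}]$ and applying $\pi_*$, the Whitney formula splits $\pi_* c^{\overline{\cV}}_{\cV\setminus E}$ into a main term and an $F$-supported correction. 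The projection formula, together with $\pi_*[\overline{\cV}] = [\overline{\cU}]$ and the fact that $\pi^*\Omega^1_{\overline{\cU}}(\log\cD)$ is a genuine pullback, identifies the main term with $c^{\overline{\cU}}_{\cU} = c(\Omega^1_{\overline{\cU}}(\log\cD)^{\vee})\cap[\overline{\cU}]$.

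It then remains to evaluate the push-forward of the $F$-supported correction, equivalently the difference $c^{\overline{\cU}}_{\cU} - \pi_* c^{\overline{\cV}}_{\cV\setminus E}$, and to identify it with $w_* c^{\cW}_{\cZ}$, whence the lemma. Since $F\to\cW$ is a weighted projective stack bundle and $w$ factors through $\rho$, this is a fibrewise Chern/Segre-class computation: the Grothendieck relation on the weighted projective bundle collapses the correction down to $\cW$, and the contribution of the relative differentials of the fibres assembles, via the Euler sequence for the weighted projective bundle, into the logarithmic Chern class of $\cW$ with boundary $\cW\cap\cD$. I expect the main obstacle to be exactly this stacky bookkeeping: verifying the logarithmic-differential exact sequence and the Euler sequence on the root stacks produced by Rydh's weighted blow-up, confirming that $F$ is the expected weighted projective stack bundle over $\cW$, and checking that the projection formula and the Segre-class identities hold with $\QQ$-coefficients in Vistoli's Chow theory. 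As a consistency check, applying the canonical maps to coarse moduli spaces and using the isomorphisms $A_*(\cX)\cong A_*(X)$ reduces the underlying numerical identity to Aluffi's scheme-level computation, the stacky input entering only through the root-stack twist of the logarithmic differentials and the weighted structure of $F$.
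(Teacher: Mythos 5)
Your strategy is coherent but genuinely different from the paper's, and it is worth saying how. The paper never writes down an exact sequence of logarithmic differentials: it quotes the Mustata--Mustata push-forward formulas for weighted blow-ups, namely $\pi_*(c(T_{\overline{\cV}})\cap[\overline{\cV}])=c(T_{\overline{\cU}})\cap[\overline{\cU}]+(d-1)\,c(T_{\cW})\cap[\cW]$ and $\rho_*(c(T_F)\cap[F])=d\cdot c(T_{\cW})\cap[\cW]$, where $d$ is the Euler characteristic of the weighted projective fibre (Lemma \ref{formula_12}); it converts these into statements about the logarithmic classes by dividing by the boundary factors $\prod_j(1+\cD_j)$ (Corollary \ref{Key_lemma_cor}); and it then splits into the case $\cZ=\emptyset$, where the lemma \emph{is} Corollary \ref{Key_lemma_cor}, and the case $\cZ\neq\emptyset$, where it follows from $\pi_*c^{\overline{\cV}}_{\cV}=c^{\overline{\cU}}_{\cU}+(d-1)w_*c^{\cW}_{\cZ}$, $c^{\overline{\cV}}_{\cV\setminus E}=c^{\overline{\cV}}_{\cV}-j_*c^{F}_{E}$ and $\rho_*c^{F}_{E}=d\cdot c^{\cW}_{\cZ}$, the coefficient $d$ cancelling. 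Your route, comparing $\pi^*\Omega^1_{\overline{\cU}}(\log\cD)$ with $\Omega^1_{\overline{\cV}}(\log(\widetilde{\cD}+F))$ directly, would give a single uniform argument with no case division; the paper's route has the advantage of isolating all the stacky input in two citable global formulas.

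That said, as a proof the proposal has a real gap: the entire quantitative content of the lemma is deferred to the unexecuted ``fibrewise Chern/Segre-class computation.'' Showing that the $F$-supported correction pushes forward to exactly $-w_*c^{\cW}_{\cZ}$ when $\cZ\neq\emptyset$, and to $0$ when $\cW\subseteq\cD$, is precisely where the weights of the $\CC^*$-action on $N_{\cW/\overline{\cU}}$ and the Euler characteristic of the weighted projective fibre enter and must cancel; asserting that the Euler sequence ``assembles'' the answer does not establish this, and on a weighted blow-up (a root stack composed with an ordinary blow-up) both the Euler sequence and the Grothendieck relation on $F$ carry twists by $\mathcal{O}_{\overline{\cV}}(1)^{\otimes w_n}$ that have to be tracked. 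Two further points need attention. First, the cokernel $\mathcal{Q}$ of your map of log differentials is a torsion sheaf supported on $F$, so ``dualizing and applying the Whitney formula'' must be interpreted in the Grothendieck group: the dual of a short exact sequence with torsion cokernel is not a short exact sequence, and $c(\mathcal{Q}^{\vee})$ has to be defined via $[\mathcal{Q}]=[\Omega^1_{\overline{\cV}}(\log(\widetilde{\cD}+F))]-[\pi^*\Omega^1_{\overline{\cU}}(\log\cD)]$. Second, the injectivity of that map and the identification of $\mathcal{Q}$ (a rank equal to the codimension of $\cW$, varying with how many components of $\cD$ contain $\cW$) is the foundation of the whole argument and is only asserted; it requires the local toroidal computation you mention, carried out on the charts of the root stack, before anything downstream can be trusted.
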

\begin{proof}
The proof is similar to Aluffi, except that we work with smooth DM stacks. 
The main points in Aluffi's proof in Proposition 4.3 in \cite{Aluffi} is two lemmas in \cite{Aluffi2}:

\begin{lemma}\label{formula_12}
\begin{enumerate}
\item  $$\pi_{*}(c(T_{\overline{\cV}})\cap [\overline{\cV}])=c(T_{\overline{\cU}})\cap [\overline{\cU}]+(d-1)\cdot c(T_{\cW})\cap [\cW];$$
\item  $$\rho_{*}(c(T_{F})\cap [F])=d\cdot c(T_{\cW})\cap [\cW],$$
\end{enumerate}
\end{lemma}
where $d$ is the Euler characteristic of the weighted projective space of the fibre $\rho: F\to \cW$. 

\begin{proof}
We first prove the formula $(2)$, which is from pushforward formula for the smooth morphism $\rho: F\to \cW$.
The morphism $\rho: F\to \cW$ is a weighted projective fibration associated with the normal bundle 
$N:=N_{\cW/\overline{\cU}}$.  As in Page 32 of \cite{MM}, the weighted projective fibration 
$F$ is the quotient stack 
$$[N_{\cW/\overline{\cU}}\setminus \cW/\CC^*],$$
where $\CC^*$ acts on the normal bundle by weights
$\{w_n\}$. Let 
$\{\cN_n\}_{n\geq 1}$ be the filtration of $N$ under the $\CC^*$
action and define 
$Q_n=\cN_{w_n}/\cN_{w_{n+1}}$ to be the sub bundle such that the $\CC^*$ acts with 
weight $w_n$.  Then from Proposition 2.11 of \cite{MM} we have:
$$c(T_{F})=\rho^* c(T_{\cW})\cdot \prod_{n} c(Q_n\otimes \mathcal{O}_{\overline{\cV}}(1)^{\otimes w_n}).$$
So applying projection formula as in Proposition 3.7 in \cite{Vistoli} we get the result, since 
the top Chern class of  $\prod_{n} c(Q_n\otimes \mathcal{O}_{\overline{\cV}}(1)^{\otimes w_n})$ is the Euler class of the fibre weighted projective space and its integral gives the Euler characteristic of the weighted projective space, see Proposition 1.6 in \cite{Behrend}. 

Now we prove  Formula $(1)$ using the formula in $(2)$.  The DM stack $F$ is the exceptional divisor of 
$\pi$.  Then Proposition 2.12 in \cite{MM} proves:
$$c(T_{\overline{\cV}})=\pi^* c(T_{\overline{\cU}})\cdot \frac{(F+1)\cdot \prod_{n}(p(Q_n(-w_n F)))}{\prod_np(Q_n)},$$
where $p(Q_n)=c(Q_n)$ but taken as a class pullback from $\overline{\cU}$. 
Denote by $P^{w}$ the fibre weighted projective space. 
Then applying pushforward by $\pi_*$ (note that product with $F$ means doing intersection with $F$), and note that 
the integration of $\prod_nw_n c_1(\mathcal{O}(1))^{\dim(P^w)}$  over $P^w$ is $1$, 
we have
$$\pi_*(c(T_{\overline{\cV}}))= c(T_{\overline{\cU}})+(d-1)(c(T_{\cW})\cap [\cW]),$$
where $d$ is the Euler characteristic of the fibre weighted projective space of $\rho$. 
\end{proof}

\begin{corollary}\label{Key_lemma_cor}
Let $\cD_j, j\in J$ be nonsingular hypersurfaces of $\overline{\cU}$ meeting with normal crossings, and let 
$\widetilde{\cD}_i$ be the proper transform of $\cD_j$ in $\overline{\cV}$.  Assume that at least one of the 
$\cD_j$ contains $\cW$. Then
$$\pi_{*}\left(\frac{c(T_{\overline{\cV})}}{(1+F)\prod_{j\in J}(1+\widetilde{\cD}_j)}[\overline{\cV}]\right)=\frac{c(T_{\overline{\cU}})}{\prod_{j\in J}(1+\cD_j)}[\overline{\cU}].$$
\end{corollary}
\begin{proof}
This is from Formulas (1),  (2) in Lemma \ref{formula_12} and the proof in Lemma 3.8 (5) of \cite{Aluffi2}.
Note that Lemma 3.8 (5) of \cite{Aluffi2} only deals with schemes, but the arguments only involve the class of divisors and Chern class of tangent bundles, which both works for smooth DM stacks as in the intersection theory of Vistoli \cite{Vistoli}. 
We omit details. 
\end{proof}

We continue the proof of main Lemma, which can be divided into two cases.

Case I: $\cZ= \emptyset$. Then
$$c_{\cU}^{\overline{\cU}}=\frac{c(T_{\overline{\cU}})}{\prod_{j\in J}(1+\cD_j)}[\overline{\cU}],$$
and 
$$c^{\overline{\cV}}_{\cV\setminus E}=\frac{c(T_{\overline{\cV})}}{(1+F)\prod_{j\in J}(1+\widetilde{\cD}_j)}[\overline{\cV}].$$
Then the formula is from Corollary \ref{Key_lemma_cor}. 

Case II: $\cZ\neq \emptyset$. i.e. $\cW$ is not contained in any component of $\cD$. 
We have 
$\widetilde{\cD}_j=\pi^{-1}(\cD_j)$. 
We calculate:
\begin{align*}
\pi_{*}\left(\frac{c(T_{\overline{\cV}})}{\prod_{i}(1+\widetilde{\cD}_i)}[\overline{\cV}]\right)&=\frac{1}{\prod_{i}(1+\cD_i)}\cap \pi_{*}(c(T_{\overline{\cV}})\cap [\overline{\cV}])\\
&=\frac{1}{\prod_{i}(1+\cD_i)}\cap \pi_{*}(c(T_{\overline{\cU}})\cap [\overline{\cU}])+(d-1)\cdot w_{*}(c(T_{\cW})\cap [\overline{\cW}])
\end{align*}
by Formula (1) in Lemma \ref{formula_12}.
So 
$$\pi_{*}c_{\cV}^{\overline{\cV}}=c_{\cU}^{\overline{\cU}}+(d-1)\cdot w_{*}c_{\cZ}^{\cW}.$$
On the other hand,  similar calculation as in Proposition 4.3 of Aluffi \cite{Aluffi} gives:
$$c_{\cV\setminus E}^{\overline{\cV}}=c_{\cV}^{\overline{\cV}}-j_{*}c_{E}^{F}.$$
Since $\rho_{*}c_{E}^{F}=d\cdot c_{\cZ}^{\cW}$ from Formula (2) in Lemma \ref{formula_12}.  These formulas together imply the result. 
\end{proof}

\textbf{Proof of Proposition \ref{good_local_data}:}
This basically follows from the following formula:
\begin{equation}\label{decomposition_formula}
\{\cU\}=z_{*}\{\cZ\}+i_{*}\{\cU\setminus \cZ\}
\end{equation}
where $z: \cZ\hookrightarrow \cU$ and $i: \cU\setminus \cZ\hookrightarrow \cU$ are embeddings and 
$z_{*}$, $i_{*}$ are push forwards defined in Section \ref{pushforward_ProChow}.  
Formula (\ref{decomposition_formula}) is proved in Lemma \ref{Key_lemma}.
If there is a decomposition $\cX=\cup_{\alpha}\cU_{\alpha}$ and $\sum_{\alpha}i_{\alpha*}\{\cU_{\alpha}\}$ is a class in 
$\widehat{A}_{*}(\cX)$, we claim that $\sum_{\alpha}i_{\alpha*}\{\cU_{\alpha}\}$ is independent of the decomposition. 
Any two decompositions have a refinement. So every element $\cU$ of one decomposition is a finite disjoint union of elements from the other,  i.e. if 
$$\cU=\cV_1\cup\cV_2\cup\cdots\cup \cV_r$$
where $\cV_j$ are nonsingular, and $\cV_j$ is closed in $\cV_1\cup\cV_2\cup\cdots\cup \cV_j$, then   
$$\{\cU\}=i_{1*}\{\cV_1\}+\cdots+i_{r*}\{\cV_r\}$$
is just from Formula (\ref{decomposition_formula}). 
$\square$

\subsection{}
\begin{definition}
For a DM stack $\cX$, the Pro-CSM class of $\cX$ is defined by
$$\{\cX\}\in \widehat{A}_{*}(\cX)$$
\end{definition}

\begin{proposition}\label{inclusion-exclusion}
The class $\{\cX\}$ satisfies the following property:
if $\cX=\cup_{j\in J}\cX_j$ is a finite union of substacks, then
$$\{\cX\}=\sum_{\emptyset\neq I\subset J}(-1)^{|I|+1}i_{I*}\{\cX_{I}\}$$
where $\cX_{I}=\cap_{i\in I}\cX_i$ and $i_{I}: \cX_{I}\hookrightarrow \cX$ is the inclusion. 
\end{proposition}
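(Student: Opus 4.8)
The plan is to derive the alternating-sum formula from the single additivity relation \ref{decomposition_formula} together with the well-definedness of the Pro-CSM class established in Proposition \ref{good_local_data}. This is the geometric counterpart of the elementary set-theoretic inclusion--exclusion $\mathds{1}_{\bigcup_j \cX_j}=\sum_{\emptyset\neq I\subseteq J}(-1)^{|I|+1}\mathds{1}_{\cX_I}$ for indicator functions, and I would realize it on the level of Pro-Chow classes by stratifying $\cX$ according to which of the $\cX_j$ a point belongs to. The one input I need is that $\{\cdot\}$ is additive over disjoint locally closed decompositions: if $\cX=\bigsqcup_\alpha \cW_\alpha$ with each $\cW_\alpha$ a locally closed substack, then $\{\cX\}=\sum_\alpha i_{\alpha*}\{\cW_\alpha\}$ in $\hat{A}_*(\cX)$. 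This follows by iterating the scissor relation \ref{decomposition_formula} and appealing to the independence-of-decomposition statement behind Proposition \ref{good_local_data}; to apply it to strata that happen to be singular one first refines each $\cW_\alpha$ into nonsingular irreducible pieces, which changes neither side.

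First I would fix the membership stratification. For $x\in\cX$ put $I(x)=\{j\in J:\ x\in\cX_j\}$; since $\cX=\bigcup_j\cX_j$, every $I(x)$ is nonempty. For each nonempty $I'\subseteq J$ set $\cX^{(I')}=\{x:\ I(x)=I'\}$, a locally closed substack, and abbreviate $a_{I'}:=i_{I'*}\{\cX^{(I')}\}\in\hat{A}_*(\cX)$, where $i_{I'}:\cX^{(I')}\hookrightarrow\cX$. These strata partition $\cX$, so additivity gives $\{\cX\}=\sum_{\emptyset\neq I'\subseteq J} a_{I'}$. Moreover a point lies in $\cX_I=\bigcap_{i\in I}\cX_i$ precisely when $I\subseteq I(x)$, so $\cX_I=\bigsqcup_{I'\supseteq I}\cX^{(I')}$; applying additivity on $\cX_I$ and then pushing forward along $i_I:\cX_I\hookrightarrow\cX$, together with the functoriality $(f\circ g)_*=f_*\circ g_*$ of Section \ref{pushforward_ProChow} (which identifies $i_I\circ\iota_{I'}$ with $i_{I'}$), yields $i_{I*}\{\cX_I\}=\sum_{I'\supseteq I} a_{I'}$.

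With both sides written in the common basis $\{a_{I'}\}$, the proof finishes by interchanging the order of summation and collapsing a binomial sum:
\[
\sum_{\emptyset\neq I\subseteq J}(-1)^{|I|+1}\,i_{I*}\{\cX_I\}
=\sum_{\emptyset\neq I'\subseteq J}\Bigl(\sum_{\emptyset\neq I\subseteq I'}(-1)^{|I|+1}\Bigr)a_{I'}.
\]
For each fixed nonempty $I'$ the inner coefficient equals $-\bigl((1-1)^{|I'|}-1\bigr)=1$, since $\sum_{I\subseteq I'}(-1)^{|I|}=0$ whenever $|I'|\geq 1$. Hence the right-hand side collapses to $\sum_{\emptyset\neq I'\subseteq J} a_{I'}=\{\cX\}$, which is exactly the asserted identity.

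The only genuinely delicate step, and the one I expect to demand the most care, is justifying the additivity relation in the inverse limit $\hat{A}_*(\cdot)$ for strata that need not be nonsingular, and verifying that the various pushforwards $i_{I*}$ and $i_{I'*}$ are mutually compatible throughout the Pro-Chow system. This is where one must lean on the well-definedness of $\{\cX\}$ behind Proposition \ref{good_local_data} (so that refining singular strata into nonsingular pieces is harmless) and on the composition law for pushforwards from Section \ref{pushforward_ProChow}. Once additivity is secured, everything else is the classical inclusion--exclusion bookkeeping recorded above.
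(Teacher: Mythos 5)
Your argument is correct, and it rests on exactly the same foundational input as the paper's: additivity of $\{\cdot\}$ over finite disjoint decompositions into locally closed substacks, which is the scissor relation $\{\cU\}=z_*\{\cZ\}+i_*\{\cU\setminus\cZ\}$ iterated (and made decomposition-independent) in the proof of Proposition \ref{good_local_data}. The route you take from there is different in organization: the paper reduces to the two-set identity $\{\cX_1\cup\cX_2\}=\{\cX_1\}+\{\cX_2\}-\{\cX_1\cap\cX_2\}$ and handles general $J$ implicitly by induction, whereas you expand both sides of the asserted formula in the common basis of membership strata $\cX^{(I')}=\{x: I(x)=I'\}$ and collapse the coefficients via the binomial identity $\sum_{I\subseteq I'}(-1)^{|I|}=0$ for $I'\neq\emptyset$. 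Your version is longer but self-contained and makes the combinatorics explicit in one pass, avoiding the (routine but unstated) induction; the paper's version is shorter and defers all the work to the two-set case. Your closing caveat is well placed: the only substantive point in either proof is that the pushforwards $i_{I*}$, $i_{I'*}$ are compatible in the Pro-Chow system and that additivity survives refinement of singular strata into nonsingular pieces, both of which are exactly what Section \ref{pushforward_ProChow} and Proposition \ref{good_local_data} supply.
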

\begin{proof}
It suffices to prove the case $|J|=2$. Then 
$$\{\cX_1\cup\cX_2\}=\{\cX_1\}+\{\cX_2\}-\{\cX_1\cap \cX_2\}$$
just follows from definition. 
\end{proof}

\subsection{}

Let $\cX$ be a DM stack.  A $\QQ$-valued constructible function $\phi: \cX\to \QQ$ is a compactly supported function such that it is 
constant along locally closed substacks of $\cX$.  It is well-known that the group of integer-valued constructible functions on a scheme $X$ is generated by characteristic functions of subschemes. Since every DM stack is generically the quotient of a scheme by a finite group, Corollarie 6.11 in \cite{LMB}, the constructible functions of $\cX$ are well-defined.  A detail treatment of constructible function using Grothendieck site can be found in Section 2 and 3 in \cite{MT}. 

Let $F(\cX)$ be the group of $\QQ$-valued constructible functions on $\cX$. Let $\phi\in F(\cX)$ be a constructible function. We write:
$$\phi=\sum_{\cZ}n_{\cZ}\mathds{1}_{\cZ},$$
where $\cZ$ is a locally closed substack of $\cX$,  $n_{\cZ}\in \QQ$ and  $\mathds{1}_{\cZ}$ is the constructible function
$$
\mathds{1}_{\cZ}(p)=
\begin{cases}
1, & p\in \cZ;\\
0, & p\notin \cZ.
\end{cases}
$$

\begin{definition}\label{ProChow_constructible_function}
The Pro-Chow class $\{\phi\}$ of a constructible function $\phi$ of $\cX$ is defined by:
$$\{\phi\}=\sum_{\cZ}n_{\cZ}\cdot i_{\cZ*}\{\cZ\}$$
where $i_{\cZ}: \cZ\hookrightarrow \cX$ is the inclusion. 
\end{definition}
\begin{remark}
The Pro-Chow class $\{\phi\}$ is a class in the Pro-Chow group $\widehat{A}_*(\cX)$. From inclusion-exclusion Proposition \ref{inclusion-exclusion} it  is independent of the decomposition of the constructible function. 
\end{remark}

\subsection{}

We briefly review the Euler characteristic for DM stacks in Section 1.3 of \cite{Behrend}, which generalizes the Euler characteristic for schemes. 

From Section 1.3 in \cite{Behrend}, the Euler characteristic with compact support
$\chi$ is a $\QQ$-valued function on isomorphism classes of pairs $(\cX, f)$, where $\cX$ is a DM stack and $f$ is 
a constructible function on $\cX$.  It satisfies the following properties:
\begin{enumerate}
\item if $\cX$ is separated and smooth, $\chi(\cX, \mathds{1})=\chi(\cX)$ is the usual Euler characteristic of $\cX$, see \cite{Behrend2};
\item $\chi(\cX, f+g)=\chi(\cX, f)+\chi(\cX, g)$;
\item if $\cX$ is the disjoint union of a closed substack $\cZ$ and its open complement $\cU$, then 
$\chi(\cX, f)=\chi(\cU, f|_{\cU})+\chi(\cZ, f|_{\cZ})$;
\item $\chi(\cX\times\mathcal{Y}, f\boxdot g)=\chi(\cX, f)\cdot \chi(\mathcal{Y}, g)$;
\item if $\cX\to \cY$ is a finite \'stale morphism of degree $d$, then $\chi(\cX, f|_{\cX})=d\cdot \chi(\cY, f)$, for any 
constructible function $f$ on $\cY$. 
\end{enumerate}
Then we take $\chi(\cX, \mathds{1}_{\cX})=\chi(\cX)$ the  Euler characteristic of $\cX$. 

Applying the pushforward map in Section \ref{pushforward_ProChow} to the structure map $\cU\to \Spec k$, we get a well-defined \textbf{degree} for each 
$\alpha\in \widehat{A}_*(\cU)$:
$$\int\alpha\in A_*(\Spec k)=\QQ.$$
Thus every constructible function $\phi\in F(\cX)$ gives rise to 
$$\int:  F(\cX)\to \QQ$$
by
$$\phi\mapsto \int_{\cX}\{\phi\}.$$
\begin{proposition}
\begin{enumerate}
\item If $\cX$ is proper and nonsingular, 
$\{\cX\}=c(T\cX)\cap [\cX]$ and 
$$\int_{\cX}\{\cX\}=\chi_{}(\cX)$$
is the Euler characteristic of $\cX$. 
\item Let $\cX$ be proper.  If $\phi\in F(\cX)$, then 
$$\int_{\cX}\{\phi\}=\chi(\cX, \phi)$$
is the weighted Euler characteristic of $\cX$. 
\end{enumerate}
\end{proposition}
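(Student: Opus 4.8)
The plan is to reduce both statements to a single log Gauss--Bonnet identity for smooth proper DM stacks and then propagate it by additivity of the degree and of the Euler characteristic.

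First I would dispose of part (1). When $\cX$ is proper and nonsingular, the identity map $\cX\xrightarrow{\mathrm{id}}\cX$ is itself a closure with empty boundary; since any closure of a proper stack is an isomorphism, $\mathrm{id}$ is cofinal in $\overline{\mathfrak{U}}$ and hence $\widehat{A}_*(\cX)=A_*(\cX)$. With $\cD=\emptyset$, Definition \ref{Defn_Chern_Log} gives $\{\cX\}=c((\Omega_{\cX}^1)^{\vee})\cap[\cX]=c(T\cX)\cap[\cX]$. The degree map of Section \ref{pushforward_ProChow} extracts the degree of the zero-dimensional part, so $\int_{\cX}\{\cX\}=\int_{\cX}c_{\topp}(T\cX)$. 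This equals $\chi(\cX)$ by the Gauss--Bonnet theorem for smooth proper DM stacks, which is consistent with property (1) of the Euler characteristic \cite{Behrend2} and with the weighted-projective computation (Proposition 1.6 of \cite{Behrend}) used in Lemma \ref{formula_12}; for a global quotient $[M/G]$ one checks it reduces via property (5) to $\tfrac1{|G|}\int_M c_{\topp}(TM)=\tfrac1{|G|}\chi(M)=\chi([M/G])$.

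Next I would reduce part (2) to the single identity $\int_{\cZ}\{\cZ\}=\chi(\cZ)$ for every DM stack $\cZ$. Writing $\phi=\sum_{\cZ}n_{\cZ}\mathds{1}_{\cZ}$, Definition \ref{ProChow_constructible_function} gives $\{\phi\}=\sum_{\cZ}n_{\cZ}\,i_{\cZ*}\{\cZ\}$; the degree is linear and factors through the pushforward maps of Section \ref{pushforward_ProChow}, so by the functoriality $(f\circ g)_*=f_*\circ g_*$ applied to $\cZ\to\cX\to\Spec k$ we get $\int_{\cX}i_{\cZ*}\{\cZ\}=\int_{\cZ}\{\cZ\}$, whence $\int_{\cX}\{\phi\}=\sum_{\cZ}n_{\cZ}\int_{\cZ}\{\cZ\}$. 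On the other side, properties (2) and (3) of $\chi$ give $\chi(\cX,\phi)=\sum_{\cZ}n_{\cZ}\chi(\cZ)$, so the two sums agree once the identity is known. To prove $\int_{\cZ}\{\cZ\}=\chi(\cZ)$ for arbitrary $\cZ$, I choose a good local data decomposition $\cZ=\bigsqcup_{\alpha}\cU_{\alpha}$ into nonsingular locally closed substacks; by Proposition \ref{good_local_data} the class $\{\cZ\}=\sum_{\alpha}i_{\cU_{\alpha}*}\{\cU_{\alpha}\}$ is well defined, so its degree is $\sum_{\alpha}\int_{\cU_{\alpha}}\{\cU_{\alpha}\}$, while property (3) of $\chi$ gives $\chi(\cZ)=\sum_{\alpha}\chi(\cU_{\alpha})$. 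This reduces the claim to the nonsingular case.

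For a nonsingular $\cU$ with good closure $i:\cU\to\overline{\cU}$ and simple normal crossing boundary $\cD=\overline{\cU}\setminus\cU$, the class $\{\cU\}=c(\Omega^1_{\overline{\cU}}(\log\cD)^{\vee})\cap[\overline{\cU}]$ lives in $A_*(\overline{\cU})$ with $\overline{\cU}$ proper, so
\[
\int_{\cU}\{\cU\}=\int_{\overline{\cU}}c_{\topp}\bigl(\Omega^1_{\overline{\cU}}(\log\cD)^{\vee}\bigr),
\]
and the remaining task is the log Gauss--Bonnet identity $\int_{\overline{\cU}}c_{\topp}(\Omega^1_{\overline{\cU}}(\log\cD)^{\vee})=\chi(\cU)$, which I expect to be the main obstacle. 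I would establish it through the relation $c(\Omega^1_{\overline{\cU}}(\log\cD)^{\vee})=c(T_{\overline{\cU}})/\prod_{j}(1+\cD_j)$ together with an inclusion--exclusion over the boundary strata, using Lemma \ref{formula_12} and Corollary \ref{Key_lemma_cor} to control how the weighted-blow-up and root-stack constructions interact with the degree; the appearance of the weighted-projective Euler class (Proposition 1.6 of \cite{Behrend}) is precisely what produces the \emph{orbifold} Euler characteristic rather than the naive one. The delicate point is the invariance of $\int_{\cU}\{\cU\}$ under change of good closure and under the tower of weighted blow-ups relating two closures, which is supplied by Proposition \ref{good_local_data} and the weak factorization for DM stacks; once this invariance is in hand, the identity follows by the same scissor/stratification argument as in Aluffi \cite{Aluffi}, now with Behrend's Euler characteristic in place of the topological one.
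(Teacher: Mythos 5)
Your argument is correct and follows the same overall strategy as the paper: reduce everything by additivity to the smooth proper case and finish with Gauss--Bonnet (Proposition 1.6 of \cite{Behrend}), which is exactly how the paper proves (2) from (1). The one place where you diverge is the last step: you stop the reduction at a possibly non-compact nonsingular stratum $\cU$ with good closure $\overline{\cU}$ and declare the log Gauss--Bonnet identity $\int_{\overline{\cU}}c_{\topp}(\Omega^1_{\overline{\cU}}(\log\cD)^{\vee})=\chi(\cU)$ to be ``the main obstacle,'' for which you only offer a sketch. In fact there is no remaining obstacle: apply the inclusion--exclusion you already have in hand (Proposition \ref{inclusion-exclusion}, equivalently the relation $c_{\cU}^{\overline{\cU}}=\frac{c(T_{\overline{\cU}})}{\prod_j(1+\cD_j)}\cap[\overline{\cU}]$ expanded over the boundary strata) to write $\{\cU\}=\{\overline{\cU}\}-\sum_{\emptyset\neq I}(-1)^{|I|+1}i_{I*}\{\cD_I\}$ with every $\overline{\cU}$ and $\cD_I$ compact and nonsingular; taking degrees and invoking part (1) on each term, and comparing with $\chi(\cU)=\chi(\overline{\cU})-\chi(\cD)$ from property (3) of $\chi$ (inducting on the dimension of the boundary), gives the identity immediately. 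This is precisely the paper's shortcut: since both $\phi\mapsto\int\{\phi\}$ and $\phi\mapsto\chi(\cX,\phi)$ satisfy inclusion--exclusion, they are determined by their values on $\mathds{1}_{\cZ}$ for $\cZ$ compact and nonsingular, where they agree by (1). Your treatment of part (1) itself (the identity closure is cofinal, $\cD=\emptyset$, then Gauss--Bonnet for smooth proper DM stacks) matches the paper.
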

\begin{proof}
(1) is the Gauss-Bonnet theorem, see Proposition 1.6 in \cite{Behrend}. 
For (2), from properties of weighted Euler characteristic in the beginning of this section, and Proposition \ref{inclusion-exclusion}, both 
$\int\{\cdot\}$ and $\chi(\cdot)$ satisfy inclusion-exclusion, so we only need to prove  
$\int_{\cX}\{\cX\}=\chi(\cX, \mathds{1}_{\cX})$ of $\cX$ is compact and nonsingular. This is (1).
\end{proof}

\section{The natural transformation functor}\label{natural_transformation_functor}

\subsection{} 

In Section \ref{Pro_Chow_class} we define Pro-Chow class and Pro-CSM class for a DM stack $\cX$. 
Moreover, we define Pro-CSM class for any constructible function $\phi$ on $\cX$. 

Let $f: \cX\to \cY$ be a morphism of DM stacks. 
Let 
$\phi=\sum_{\cZ}n_{\cZ}\cdot \mathds{1}_{\cZ}\in F(\cX)$ be a constructible function on 
$\cX$, where $\cZ\hookrightarrow \cX$ is the inclusion of a locally closed substack, and $n_\cZ\in\QQ$. 
Define
$$f_{*}: F(\cX)\to F(\cY)$$
by
$$f_{*}(\phi)(p)=\sum_{\cZ}n_{\cZ}\cdot \int \{f^{-1}(p)\cap \cZ\},$$
where $\{f^{-1}(p)\cap \cZ\}$ is the Pro-Chow class of $f^{-1}(p)\cap \cZ$, and $\int \{f^{-1}(p)\cap \cZ\}$ is the 
Euler characteristic of $f^{-1}(p)\cap \cZ$. 
Since we work on morphisms of non proper DM stacks, we  first prove that 
$F$ is a functor from category of DM stacks to abelian groups of constructible functions. 

\begin{theorem}\label{Covariance}
Let $f: \cX\to \cY$, $g: \cY\to\cZ$ be morphisms of DM stacks. Then
$$(g\circ f)_{*}=g_{*}\circ f_{*}.$$
\end{theorem}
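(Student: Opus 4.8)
The plan is to show that the two constructible functions $(g\circ f)_*(\phi)$ and $g_*(f_*(\phi))$ agree by evaluating both at an arbitrary point $q\in\cZ$ and reducing the identity to the additivity and fibration properties of the Euler characteristic recalled in Section \ref{Pro_Chow_class}. By linearity of $f_*$ and $g_*$ in $\phi$, it suffices to treat the case $\phi=\mathds{1}_{\cW}$ for a single locally closed substack $\cW\hookrightarrow\cX$, since a general $\phi$ is a finite $\QQ$-linear combination of such characteristic functions and both sides of the asserted identity are $\QQ$-linear.

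First I would unwind the definitions. For $\phi=\mathds{1}_{\cW}$ the pushforward along $f$ is the constructible function $f_*(\mathds{1}_{\cW})(p)=\int\{f^{-1}(p)\cap\cW\}=\chi\bigl(f^{-1}(p)\cap\cW\bigr)$, where the last equality is the identification of the degree of the Pro-Chow class with the (compactly supported) Euler characteristic. Applying $g_*$ and evaluating at $q\in\cZ$ then gives, after again using the definition and $\QQ$-linearity,
\[
g_*\bigl(f_*(\mathds{1}_{\cW})\bigr)(q)=\int\Bigl\{g^{-1}(q)\cap\, \mathrm{supp}\, f_*(\mathds{1}_{\cW})\Bigr\},
\]
which by additivity of $\chi$ over the level sets of the constructible function $f_*(\mathds{1}_{\cW})$ on $g^{-1}(q)$ can be rewritten as a weighted Euler characteristic of $g^{-1}(q)$ against $f_*(\mathds{1}_{\cW})$. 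Meanwhile the left-hand side evaluates directly as $(g\circ f)_*(\mathds{1}_{\cW})(q)=\chi\bigl((g\circ f)^{-1}(q)\cap\cW\bigr)=\chi\bigl(f^{-1}(g^{-1}(q))\cap\cW\bigr)$.

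Thus the identity to prove becomes a single statement about Euler characteristics: if we let $\cT=f^{-1}(g^{-1}(q))\cap\cW$ and consider the restriction of $f$ to a map $\cT\to g^{-1}(q)$, then
\[
\chi(\cT)=\int_{p\in g^{-1}(q)}\chi\bigl(f^{-1}(p)\cap\cW\bigr),
\]
where the right-hand side is the Euler characteristic of $g^{-1}(q)$ weighted by the constructible function $p\mapsto\chi(f^{-1}(p)\cap\cW)$. This is precisely the fibration formula for the compactly supported Euler characteristic: stratify $g^{-1}(q)$ into finitely many locally closed substacks on each of which the function $p\mapsto\chi(f^{-1}(p)\cap\cW)$ is constant, apply additivity (property (3)) of $\chi$ over this stratification, and on each stratum use multiplicativity of $\chi$ for the restricted fibration together with property (5) to account for the stacky (étale) contributions. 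Summing over strata yields the claimed equality.

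The main obstacle will be the fibration step in the stacky setting: over a scheme this is the classical additivity–multiplicativity of $\chi_c$, but here $f|_{\cT}\colon\cT\to g^{-1}(q)$ is a morphism of DM stacks whose fibers are themselves stacks, so I must ensure that the local constancy of $p\mapsto\chi(f^{-1}(p)\cap\cW)$ along a suitable stratification is genuinely available. I would secure this by passing to coarse moduli spaces where constructibility and generic local triviality hold in the usual sense, and by invoking property (5) (the degree-$d$ finite étale behavior) to control the discrepancy between the stacky Euler characteristics and their coarse counterparts stratum by stratum; the generic-quotient description of DM stacks via Corollarie 6.11 in \cite{LMB}, together with the additivity and multiplicativity properties (2)--(5), then furnishes exactly the decomposition needed to equate the two sides.
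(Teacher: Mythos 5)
Your proposal is correct, and the fact it isolates --- the Fubini formula $\chi\bigl((g\circ f)^{-1}(q)\cap\cW\bigr)=\chi\bigl(g^{-1}(q),\,p\mapsto\chi(f^{-1}(p)\cap\cW)\bigr)$, proved by stratifying the base so the fibrewise Euler characteristic is locally constant and then using additivity and multiplicativity --- is the same fact the paper's proof ultimately rests on; but the packaging is genuinely different. The paper introduces \emph{gentle} morphisms (smooth surjective maps admitting a normal-crossings compactification all of whose boundary strata map properly, smoothly and surjectively to the target), derives the constancy of $\chi_f=\int\{f^{-1}(p)\}$ and the class-level identity $f_*\{\cU\}=\chi_f\cdot\{\cV\}$ from the graph-construction Lemma \ref{key_lemma}, simultaneously decomposes $f$ and $g$ into compatible gentle pieces $\cU_{\alpha,i,j}\to\cV_{\alpha,i}\to\cW_\alpha$, and concludes from $\chi_{\alpha ij}=\chi'_{\alpha ij}\cdot\chi_{\alpha i}$. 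You instead fix $q\in\cZ$ and argue purely numerically over the single fibre $g^{-1}(q)$, bypassing the Pro-Chow class machinery entirely; this is legitimate and more elementary, since Theorem \ref{Covariance} is a statement only about constructible functions, whereas the paper's heavier route has the advantage that the gentle decomposition and the class-level pushforward formula are exactly what get reused to prove the naturality Theorem \ref{natural_transformation}. Two points to tighten in your write-up: the displayed formula $g_*\bigl(f_*(\mathds{1}_{\cW})\bigr)(q)=\int\bigl\{g^{-1}(q)\cap\mathrm{supp}\,f_*(\mathds{1}_{\cW})\bigr\}$ is not literally correct as stated (the values of $f_*(\mathds{1}_{\cW})$, not merely its support, must weight the class --- you do say the right thing in the clause that follows); and the multiplicativity you invoke stratum by stratum is not property (4) (which concerns products) but requires generic local triviality of the restricted map, which, as you indicate, one secures by passing to coarse moduli spaces and \'etale charts --- exactly the step the paper performs when it constructs its gentle decomposition, so your argument carries the same (unelaborated) burden there as the paper's does.
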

\begin{proof}
We provide the basic method on how to prove this result for DM stacks, generalizing the proof of 
\cite{Aluffi}.   The idea is to decompose the DM stack into nonsingular irreducible substacks, and do a similar 
argument as in  Section 5 in \cite{Aluffi}.  
The idea of Grothendieck site as developed in  \cite{MT} can be used prove this result, but we use a direct method here. 
 
A \emph{gentle} morphism $f: \cU\to \cV$ between two nonsingular DM stacks $\cU$ and $\cV$ is a smooth and surjective morphism. Furthermore there exists a proper DM stack $\underline{\cU}$ and a smooth and surjective morphism 
$\underline{f}: \underline{\cU}\to \cV$ such that:
\begin{enumerate}
\item $\cU$ is an open dense subset of $\underline{\cU}$, and $\underline{f}|_{\cU}=f$;
\item the complement $\mathcal{H}=\underline{\cU}\setminus \cU$ is a divisor with normal crossings and nonsingular components $\mathcal{H}_i$;
\item  letting $\mathcal{H}_{I}=\cap_{i\in I}\mathcal{H}_i$ (so that $\mathcal{H}_{\emptyset}=\underline{\cU}$) and 
$\mathcal{H}_{I}$ is nonsingular, then the restriction 
$$\underline{f}|_{\mathcal{H}_{I}}: \mathcal{H}_{I}\to \cV$$
is proper, smooth and surjective. 
\end{enumerate}
If $f: \cU\to\cV$ is gentle, then $\chi_{f}=\int\{f^{-1}(p)\}$ is independent of $p\in \cV$.  This is from Lemma \ref{key_lemma} and a similar argument as in Lemma 5.5 in \cite{Aluffi} for smooth DM stacks. Hence 
$f_*\{\cU\}=\chi_f\cdot \{\cV\}$, which can be proved using inclusion-exclusion and Lemma \ref{key_lemma} again. 
So if $f: \cU\to \cV$ and $g: \cV\to\cW$ are two gentle morphisms, then 
$$\chi_{g\circ f}=\chi_g\cdot \chi_f.$$
Now to prove the theorem, it suffices to prove the two push forwards agree on the characteristic functions of substacks of $\cX$. 
We need to decompose $f: \cX\to \cY$ and $g: \cY\to\cZ$ into gentle morphisms.  A DM stack is  locally a scheme by a finite group action, hence using the morphism to its coarse moduli space (which is a scheme),  morphisms
$f: \cX\to \cY, g: \cY\to \cZ$ can be decomposed into:
$$\cX=\sqcup_{\alpha, i,j}\cU_{\alpha, i,j}, \quad \cY=\sqcup_{\alpha, i}\cV_{\alpha, i},\quad \cZ=\sqcup \cW_\alpha$$
such that 
$$f_{\alpha,i,j}:=f|_{\cU_{\alpha,i,j}}: \cU_{\alpha,i,j}\to \cV_{\alpha,i}, \quad g_{\alpha,i}:=g|_{\cV_{\alpha,i}}: \cV_{\alpha,i}\to \cW_{\alpha}$$
are gentle.  
Also for $$\cU_{\alpha,i,j}\stackrel{f_{\alpha,i,j}}{\longrightarrow}\cV_{\alpha,i}\stackrel{g_{\alpha,i}}{\longrightarrow} \cW_{\alpha}$$
we have
$$\chi_{\alpha ij}=\chi_{\alpha ij}^\prime\cdot \chi_{\alpha i},$$
where $\chi_{\alpha ij}^\prime,  \chi_{\alpha i}, \chi_{\alpha ij}$ are the corresponding fiberwise degrees for 
$f_{\alpha, i,j}, g_{\alpha,i}$ and $g_{\alpha,i}\circ f_{\alpha, i,j}$ respectively. So a direct calculation gives:
$$g_{*}(f_{*} \mathds{1}_{\cU_{\alpha,i,j}})=g_{*}(\chi_{\alpha ij}^\prime\cdot \mathds{1}_{\cV_{\alpha,i}})=\chi_{\alpha ij}^\prime\cdot \chi_{\alpha i}\cdot \mathds{1}_{\cW_\alpha}=(g\circ f)_{*}(\mathds{1}_{\cU_{\alpha,i,j}}).$$
\end{proof}

Then the assignment:
$$c_*: F\leadsto\widehat{A}_{*}$$ by
$$\phi\mapsto c_*(\phi)=\{\phi\}$$
is a transformation of functors.
In particular,  $c_*(\mathds{1}_{\cX})=c_*(\cX)\in \widehat{A}_*(\cX)$ is the Pro-CSM class of $\cX$. 
In this section we prove that $c_*$ is a natural transformation of  functors. 
The naturality theorem is:
\begin{theorem}(Naturality)\label{natural_transformation}
The transformation functor $c_*:  F\to \widehat{A}_*$ is a natural transformation of covariant functors from 
the category of DM stacks to the category of abelian groups. 
\end{theorem}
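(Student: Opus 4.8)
The plan is to show that for every morphism $f:\cX\to\cY$ the naturality square
\[
\begin{CD}
F(\cX) @>{c_*}>> \widehat{A}_*(\cX)\\
@V{f_*}VV @VV{f_*}V\\
F(\cY) @>{c_*}>> \widehat{A}_*(\cY)
\end{CD}
\]
commutes, that is, $f_*\{\phi\}=\{f_*\phi\}$ for every $\phi\in F(\cX)$, where on the left $f_*$ is the Pro-Chow pushforward of Section \ref{pushforward_ProChow} and inside the right-hand bracket $f_*$ is the pushforward of constructible functions whose functoriality is Theorem \ref{Covariance}. All three maps $f_*:F(\cX)\to F(\cY)$, $f_*:\widehat{A}_*(\cX)\to\widehat{A}_*(\cY)$, and $c_*$ (Definition \ref{ProChow_constructible_function}) are $\QQ$-linear, so it suffices to verify the identity on the generators $\mathds{1}_{\cZ}$ with $\cZ\subset\cX$ a nonsingular locally closed substack. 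Writing $\iota:\cZ\hookrightarrow\cX$ for the inclusion, one has $\mathds{1}_{\cZ}=\iota_*\mathds{1}_{\cZ}$ in $F(\cX)$ and $c_*(\mathds{1}_{\cZ})=\iota_*\{\cZ\}$ by definition; applying the functoriality of the two pushforwards to $f\circ\iota$, the case $(f,\mathds{1}_{\cZ})$ reduces to the case $(f\circ\iota,\mathds{1}_{\cZ})$. Hence I may assume $\cX$ is nonsingular and $\phi=\mathds{1}_{\cX}$, so that the goal becomes the single identity $f_*\{\cX\}=\{f_*\mathds{1}_{\cX}\}$.

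Then I would decompose $f$ into gentle morphisms exactly as in the proof of Theorem \ref{Covariance}. Passing to coarse moduli spaces and using that a DM stack is locally a quotient of a scheme by a finite group, stratify $\cX=\sqcup_\alpha\cU_\alpha$ and $\cY=\sqcup_\beta\cV_\beta$ into nonsingular irreducible locally closed substacks so that each restriction $f_\alpha:=f|_{\cU_\alpha}:\cU_\alpha\to\cV_{\beta(\alpha)}$ is gentle; generic smoothness over $\CC$ guarantees such a stratification exists. On each gentle piece the fiberwise Euler characteristic $\chi_{f_\alpha}=\int\{f_\alpha^{-1}(p)\}$ is independent of $p$, whence $f_{\alpha*}\mathds{1}_{\cU_\alpha}=\chi_{f_\alpha}\cdot\mathds{1}_{\cV_{\beta(\alpha)}}$ and therefore $c_*(f_{\alpha*}\mathds{1}_{\cU_\alpha})=\chi_{f_\alpha}\{\cV_{\beta(\alpha)}\}$. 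On the other hand the gentle formula $f_{\alpha*}\{\cU_\alpha\}=\chi_{f_\alpha}\{\cV_{\beta(\alpha)}\}$ proved in Theorem \ref{Covariance} produces the same class, so naturality holds on each stratum. Summing over $\alpha$ and using $\QQ$-linearity together with inclusion-exclusion (Proposition \ref{inclusion-exclusion}) assembles these stratumwise identities into $f_*\{\cX\}=\{f_*\mathds{1}_{\cX}\}$.

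The main obstacle is the gentle decomposition in the stacky setting and its compatibility with the inverse-limit definition of $\widehat{A}_*$. Producing strata that are simultaneously nonsingular, on which $f$ is gentle, and whose boundary divisors admit compactifications with normal crossings relies on Rydh's compactification theorem for DM stacks \cite{Rydh} and the stacky weak factorization of Bergh--Rydh \cite{BerghRydh}; these are exactly the tools that let one move between good closures when computing Pro-Chow pushforwards, and they guarantee that the gentle formula of Theorem \ref{Covariance} --- ultimately a consequence of the key Lemma \ref{Key_lemma} --- does not depend on the chosen compactification. Once this decomposition is secured, the remaining argument is the formal linearity bookkeeping described above, and the theorem follows.
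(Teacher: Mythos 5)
Your proposal is correct and follows essentially the same route as the paper: reduce by $\QQ$-linearity to characteristic functions of nonsingular substacks, decompose $f$ into gentle morphisms, apply the gentle pushforward formula $f_{\alpha*}\{\cU_\alpha\}=\chi_{f_\alpha}\{\cV_{\beta(\alpha)}\}$ from the proof of Theorem \ref{Covariance}, and reassemble by linearity. The only slip is a harmless label mix-up at the end: the gentle formula rests on Lemma \ref{key_lemma} (the pushforward identity for proper smooth surjective maps, proved via the graph construction), not on Lemma \ref{Key_lemma} (the blow-up decomposition formula).
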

\begin{remark}
In particular, $c_*$ sends the constant function $\mathds{1}_{\cX}$ for a smooth DM stack $\cX$ to $c(T\cX)\cap [\cX]$.
\end{remark}

The key point to prove Theorem \ref{Covariance} and Theorem \ref{natural_transformation} is the following Lemma:

\begin{lemma}\label{key_lemma}
Let $f: \cU\to\cV$ be a proper, smooth, surjective map of nonsingular DM stacks. Then 
$$f_{*}(\{\cU\})=\chi_{f}\cdot \{\cU\}$$
where $\chi_{f}=\int\{f^{-1}(p)\}$ and any $p\in \cV$. 
\end{lemma}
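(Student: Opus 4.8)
The plan is to establish the identity in $\widehat{A}_*(\cV)$; since $f_*$ lands in the Pro-Chow group of the target, the right-hand class must be $\{\cV\}$, so I shall prove
$$f_*(\{\cU\})=\chi_f\cdot\{\cV\},\qquad \chi_f=\int\{f^{-1}(p)\}.$$
First I would check that $\chi_f$ is well defined. Because $f$ is proper and smooth, each fibre $f^{-1}(p)$ is a proper nonsingular DM stack, so by Gauss--Bonnet for such stacks (part (1) of the Proposition above, i.e.\ Proposition 1.6 of \cite{Behrend}) we have $\int\{f^{-1}(p)\}=\chi(f^{-1}(p))$; the topological Euler characteristic of the fibres of a smooth proper family is locally constant, hence constant on each connected component of $\cV$, and I may assume $\cV$ irreducible, treating components separately otherwise. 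Next, by the Proposition characterizing Pro-Chow classes through good closures, it suffices to verify the identity after pushing forward to every good closure $\cV\hookrightarrow\overline{\cV}$: concretely, I must show that the value assigned to $\overline{\cV}$ by $f_*\{\cU\}$ equals $\chi_f\cdot c_{\cV}^{\overline{\cV}}$, where $c_{\cV}^{\overline{\cV}}=c(\Omega^1_{\overline{\cV}}(\log\cD_{\cV})^{\vee})\cap[\overline{\cV}]$ and $\cD_{\cV}=\overline{\cV}\setminus\cV$.

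To identify that value I would build a compatible good closure of the source. Using Rydh's compactification together with the weighted blow-up and weak factorization machinery for DM stacks (as in the proof that closures are cofinal, via \cite{Rydh}), I would construct a good closure $\cU\hookrightarrow\overline{\cU}$ and a proper morphism $\overline{f}\colon\overline{\cU}\to\overline{\cV}$ extending $\cU\xrightarrow{f}\cV\hookrightarrow\overline{\cV}$, arranged so that $\overline{f}$ is log-smooth for the boundary divisors $\cD_{\cU}=\overline{\cU}\setminus\cU$ and $\cD_{\cV}$. Because $f$ is proper, the source is already complete in the fibre direction, so I can and will demand $\cD_{\cU}=\overline{f}^{-1}(\cD_{\cV})$; in particular $\overline{f}$ restricts over $\cV$ to the original smooth proper map $f$, with no relative boundary over $\cV$. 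With such a model in hand, the definition of the Pro-Chow pushforward shows that the value assigned to $\overline{\cV}$ by $f_*\{\cU\}$ is exactly $\overline{f}_*\big(c_{\cU}^{\overline{\cU}}\big)$.

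The computation then runs through the relative logarithmic cotangent sequence. Log-smoothness of $\overline{f}$ gives a short exact sequence of locally free sheaves
$$0\to \overline{f}^*\Omega^1_{\overline{\cV}}(\log\cD_{\cV})\to \Omega^1_{\overline{\cU}}(\log\cD_{\cU})\to \Omega^1_{\overline{\cU}/\overline{\cV}}(\log)\to 0,$$
so writing $\Theta=(\Omega^1_{\overline{\cU}/\overline{\cV}}(\log))^{\vee}$ for the relative log tangent sheaf, of rank $r=\dim\overline{\cU}-\dim\overline{\cV}$, the Whitney formula after dualizing combined with the projection formula yields
$$\overline{f}_*\big(c_{\cU}^{\overline{\cU}}\big)=c(\Omega^1_{\overline{\cV}}(\log\cD_{\cV})^{\vee})\cap \overline{f}_*\big(c(\Theta)\cap[\overline{\cU}]\big).$$
It then remains to show $\overline{f}_*(c(\Theta)\cap[\overline{\cU}])=\chi_f\cdot[\overline{\cV}]$, and here a dimension count does the work: $c_j(\Theta)\cap[\overline{\cU}]$ lies in $A_{\dim\overline{\cU}-j}(\overline{\cU})$, whose pushforward can be nonzero only when $\dim\overline{\cU}-j\le\dim\overline{\cV}$, i.e.\ $j\ge r$; as $\Theta$ has rank $r$, only the term $j=r$ survives, and $\overline{f}_*(c_r(\Theta)\cap[\overline{\cU}])\in A_{\dim\overline{\cV}}(\overline{\cV})=\QQ\cdot[\overline{\cV}]$. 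The coefficient is read off on a general fibre, which lies over $\cV$ and hence is the honest smooth proper fibre $f^{-1}(p)$ with $\Theta$ restricting to its ordinary tangent sheaf; by Gauss--Bonnet it equals $\int_{f^{-1}(p)}c_{\mathrm{top}}(T_{f^{-1}(p)})=\chi(f^{-1}(p))=\chi_f$. Substituting back gives $\overline{f}_*(c_{\cU}^{\overline{\cU}})=\chi_f\cdot c_{\cV}^{\overline{\cV}}$, the required equality on $\overline{\cV}$.

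The main obstacle is the construction in the second paragraph: producing a single proper model $\overline{f}\colon(\overline{\cU},\cD_{\cU})\to(\overline{\cV},\cD_{\cV})$ that is simultaneously a morphism of good closures and log-smooth, so that the relative log cotangent sequence is exact with locally free terms of the correct rank. This is the stacky analogue of Aluffi's good compactification of a morphism and genuinely relies on the weighted blow-up, root-stack, and weak factorization results for DM stacks invoked earlier; once such a toroidal model is available, the remaining projection-formula-plus-dimension argument is purely formal.
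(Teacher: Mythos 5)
You correctly repair the statement (the right-hand side must be $\chi_f\cdot\{\cV\}$, not $\chi_f\cdot\{\cU\}$), and your reduction to checking the identity on good closures, together with the final computation --- relative log cotangent sequence, Whitney and projection formulas, and the dimension count isolating $c_r(\Theta)$ with coefficient $\chi(f^{-1}(p))$ by Gauss--Bonnet on a general fibre --- is sound \emph{granted} the model you posit. The gap is exactly the construction you yourself flag as the ``main obstacle,'' and it is not supplied by the tools you invoke. Rydh's compactification, weighted blow-ups/root stacks, and weak factorization produce, respectively, some proper extension $g:\cX\to\cY$ of $f$, a way to dominate one closure by another, and a comparison of two birational models; none of them makes the compactified morphism log-smooth with locally free relative log differentials. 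What you need is (a stacky version of) weak toroidalization of a morphism in the sense of Abramovich--Karu, a substantially deeper theorem that is neither proved nor cited in this paper; without it your relative log cotangent sequence need not be exact with locally free terms of rank $r$, and the computation collapses. (By contrast, the condition $\cD_{\cU}=\overline{f}^{-1}(\cD_{\cV})$ is automatic from properness of $f$ once $\overline{f}$ is a proper extension; log-smoothness is the real issue.)

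The paper's proof (following Aluffi's Section 6, which in turn follows MacPherson) is engineered precisely to avoid this. It takes an \emph{arbitrary} proper surjective $g:\cX\to\cY$ between good closures extending $f$, observes that $dg: g^*\Omega^1_{\cY}(\log E)\to\Omega^1_{\cX}(\log\cD)$ is injective only over $\cU$, and applies MacPherson's graph construction to $dg$ inside $\Grass_n\bigl(g^*\Omega^1_{\cY}(\log E)\oplus\Omega^1_{\cX}(\log\cD)\bigr)$. The limit cycle at $0$ splits as the proper transform $\widetilde{\cX}$, which yields the term $\widetilde{\chi}_f\cdot c(\Omega^1_{\cY}(\log E))\cap[\cY]$ by essentially your general-fibre argument, plus exceptional components $\Gamma_i$ supported over the boundary; the substantive work is then showing $p_*(c(\mathcal{Q})\cap[\Gamma_i])=0$ via the residue exact sequence and a rank-versus-dimension estimate (Aluffi's Lemma 6.5). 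To salvage your route you would have to either establish toroidalization of morphisms for DM stacks or replace your second paragraph by this graph-construction argument.
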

\begin{proof}
From the definition of Pro-Chow group class $\{\cU\}$, it is sufficient to prove that for any diagram of closures:
$$
\xymatrix{
\cU\ar[r]^{i}\ar[d]_{f}& \cX\ar[d]^{g}\\
\cV\ar[r]^{j}& \cY
}
$$
where $i$ and $j$ are good closures, $f$ is smooth, $f$ and $g$ are proper and surjective, we have:
$$g_{*}c_{\cU}^{\cX}=\chi_{f}\cdot c_{\cV}^{\cY},$$
i.e.
\begin{equation}\label{key_formula_1}
g_{*}(c(\Omega_{\cX}^{1}(\log \cD)^\vee)\cap [\cX])=\chi_{f}\cdot c(\Omega_{\cY}^{1}(\log E)^\vee)\cap [\cY],
\end{equation}
where $\cD, E$ are the complement $\cX\setminus \cU$, $\cY\setminus \cV$ respectively. 

We prove (\ref{key_formula_1}) using \textbf{graph construction} of MacPherson on DM stacks, which we construct below. For the morphism
$g: \cX\to \cY$, we consider:
$$dg: g^*\Omega_{\cY}^{1}(\log E)\to \Omega_{\cX}^{1}(\log \cD).$$
Since $g$ is smooth over $\cU$, $dg$ is injective over $\cU$. We will apply graph construction to 
$dg$. The formula (\ref{key_formula_1}) is equivalent to the following:
\begin{equation}\label{key_formula_2}
g_{*}(c(\Omega_{\cX}^{1}(\log \cD))\cap [\cX])=\widetilde{\chi}_{f}\cdot c(\Omega_{\cY}^{1}(\log E))\cap [\cY],
\end{equation}
where $\widetilde{\chi}_f$ is the same as $\chi_f$ up to a sign.
\end{proof}

The next several subsections will finish the proof of Formula  (\ref{key_formula_2}).

\subsection{}
We generalize the graph construction of MacPherson to DM stacks.  
The construction is similar to the scheme case of MacPherson in \cite{MacPherson}, except we work 
with smooth DM stacks and cotangent bundles. 
Note that in \cite{KKP}, Kim, Kresch and Pantev already used the graph construction  for the compatibility of obstruction theories which involves DM stacks. 

Let $m=\dim(\cX), n=\dim(\cY)$. Then $dg$ gives a rational morphism:
$$\gamma: \cX\times \PP^1\dashrightarrow \G:=\Grass_{n}(g^*\Omega_{\cY}^{1}(\log E)\oplus \Omega_{\cX}^{1}(\log \cD))$$
by
$$x\times \{(\lambda:1)\}\mapsto ~ \mbox{the graph of~}\frac{1}{\lambda}dg \mbox{~at~} x.$$
The indeterminates of $\gamma$ are contained in $\cD\times 0\subset \cX\times 0:=\cX\times \{[0:1]\}$. So we have the following diagram:
\begin{equation}\label{graph_diagram}
\xymatrix{
\widetilde{\cX\times \PP^1}\ar[d]^{\pi}\ar[dr]^{\widetilde{\gamma}}\ar@/_3pc/[ddd]_{p}&\\
\cX\times\PP^1\ar@{-->}[r]^{\gamma}\ar[d]^{\rho}&\G\\
\cX\ar[d]^{g}&\\
\cY
}
\end{equation}
where $\widetilde{\cX\times \PP^1}$ is the graph of $\gamma$, (i.e. the stacky blow-up of $\cX\times \PP^1$ along the indeterminacies). 
For stacky blow-ups, see \cite{MM} and \cite{Rydh}. 
Then 
$$[\pi^{-1}(\cX\times \{\infty\})]=[\pi^{-1}(\cX\times\{0\})].$$
We can express the pre image as:
$$\pi^{-1}(\cX\times\{0\})=\widetilde{\cX}\bigcup \cup_ir_i\Gamma_i,$$
where $\widetilde{\cX}$ is the proper transform of $\cX\times \{0\}$ and $\Gamma_i$ are exceptional divisors and $r_i$ are multiplicities. 
All of these results are from Example 18.1.6 of Fulton \cite{Fulton}, which can be generalized to smooth DM stacks.
The basic intersection theory of Fulton \cite{Fulton} holds for DM stacks and thanks for A. Kresch for communication about this theory. 

Let $\mathcal{Q}$ be the universal rank-$m$ quotient bundle over the Grassmannian bundle $\G$. 
Then 
$$c(\widetilde{\gamma}^*\mathcal{Q})\cap [\pi^{-1}(\cX\times \{\infty\})]
=c(\widetilde{\gamma}^*\mathcal{Q})\cap \left([\widetilde{\cX}]+\sum_ir_i[\Gamma_i]\right)\in A_*(\cX\times\PP^1)$$
First we have:
$$p_{*}(c(\widetilde{\gamma}^*\mathcal{Q})\cap [\pi^{-1}(\cX\times \{\infty\})])=g_{*}(c(\Omega_{\cX}^{1}(\log \cD))\cap[\cX]),$$
as may be verified by chasing Diagram (\ref{graph_diagram}), since over 
$\cX\times \infty:=\cX\times\{[1:0]\}$, $\gamma$ acts on the section corresponding to 
$g^*\Omega_{\cY}^1(\log E)\oplus 0$.

\begin{lemma}
$$p_{*}(c(\widetilde{\gamma}^*\mathcal{Q})\cap [\widetilde{\cX}])=\widetilde{\chi}_f\cdot c(\Omega_{\cY}^{1}(\log E))\cap [\cY].$$
\end{lemma}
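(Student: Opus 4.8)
The plan is to analyze the restriction of $\widetilde\gamma$ to the proper transform $\widetilde\cX$ and to exploit the splitting of the universal quotient bundle that the graph construction produces over the smooth locus. First I would examine the limiting subspace at $\lambda=0$. Since $g$ is smooth over $\cU$, the bundle map $dg$ is a fibrewise injection of $g^*\Omega^1_\cY(\log E)$ into $\Omega^1_\cX(\log\cD)$ over $\cU$; rescaling the family of graphs $\{(v,\tfrac1\lambda dg(v))\}$ by $\lambda$ shows that as $\lambda\to0$ the graph degenerates to the subbundle $0\oplus \operatorname{im}(dg)$ of $g^*\Omega^1_\cY(\log E)\oplus\Omega^1_\cX(\log\cD)$. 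Consequently, over $\cU$ the restriction $\widetilde\gamma^*\mathcal{Q}$ is canonically identified with $g^*\Omega^1_\cY(\log E)\oplus\bigl(\Omega^1_\cX(\log\cD)/\operatorname{im}(dg)\bigr)$, and the second summand is the relative logarithmic cotangent sheaf of $g$, which restricts on each fibre $\cF$ of $f$ to the cotangent bundle of that proper, smooth fibre.

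The next step is to promote this generic picture to a global statement on $\widetilde\cX$. The inclusion of the first factor followed by projection to the quotient gives a bundle map $\phi\colon \tilde p^*\Omega^1_\cY(\log E)\to \widetilde\gamma^*\mathcal{Q}$, where $\tilde p=g\circ\pi|_{\widetilde\cX}\colon\widetilde\cX\to\cY$; by the computation above $\phi$ is a fibrewise injection over $\cU$, and the point of passing to the graph resolution $\widetilde{\cX\times\PP^1}$ is that on the proper transform $\phi$ is a subbundle inclusion, so that $\mathcal{K}:=\operatorname{coker}\phi$ is locally free of rank $m-n=\dim\cF$. Granting this, the Whitney formula gives
\[
c(\widetilde\gamma^*\mathcal{Q})=\tilde p^*c(\Omega^1_\cY(\log E))\cdot c(\mathcal{K}),
\]
and the projection formula applied to $p=\tilde p$ on $\widetilde\cX$ yields
\[
p_*\bigl(c(\widetilde\gamma^*\mathcal{Q})\cap[\widetilde\cX]\bigr)=c(\Omega^1_\cY(\log E))\cap p_*\bigl(c(\mathcal{K})\cap[\widetilde\cX]\bigr).
\]

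It then remains to evaluate $p_*(c(\mathcal{K})\cap[\widetilde\cX])$, and here a rank-and-dimension argument closes everything cleanly. Because $\mathcal{K}$ has rank exactly $m-n$, its Chern classes vanish in degrees $>m-n$; and $c_j(\mathcal{K})\cap[\widetilde\cX]$ lives in $A_{m-j}(\widetilde\cX)$, whose pushforward lands in $A_{m-j}(\cY)$, which vanishes for $m-j>n$, i.e.\ for $j<m-n$. Thus the only surviving term is $j=m-n$, for which $p_*\bigl(c_{m-n}(\mathcal{K})\cap[\widetilde\cX]\bigr)=\bigl(\int_\cF c_{m-n}(\mathcal{K}|_\cF)\bigr)\,[\cY]$, the coefficient being the fibrewise degree over the dense open $\cV$. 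By Gauss--Bonnet on $\cF$ this integral equals $\widetilde\chi_f$, namely $(-1)^{\dim\cF}\chi(\cF)$, since $\mathcal{K}|_\cF=\Omega^1_\cF$ and $c_{\dim\cF}(\Omega^1_\cF)=(-1)^{\dim\cF}e(T\cF)$; this is $\chi_f$ up to the expected sign. Combining the two displays gives $p_*(c(\widetilde\gamma^*\mathcal{Q})\cap[\widetilde\cX])=\widetilde\chi_f\cdot c(\Omega^1_\cY(\log E))\cap[\cY]$, as claimed.

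The step I expect to be the genuine obstacle is the global claim that $\phi$ is a subbundle inclusion on all of $\widetilde\cX$ (equivalently, that $\mathcal{K}$ is locally free), since $g$ is smooth only over $\cU$ and $dg$ degenerates along $\cD$. This is exactly what MacPherson's graph construction is engineered to guarantee after resolving the indeterminacy locus, which is contained in $\cD\times 0$, and for DM stacks it rests on the stacky blow-up together with the validity of Example 18.1.6 of \cite{Fulton} in the stacky intersection theory of \cite{Vistoli}. I would secure it by a local analysis along $\cD$ using the logarithmic structure; alternatively, one can argue that any failure is supported over $E\subset\cY$ and so contributes only to $A_{<n}(\cY)$, which is killed by the same dimension count used above, so that the computed value is unaffected.
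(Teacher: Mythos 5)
Your argument is correct and is essentially the paper's own proof: the paper likewise observes that $\widetilde{\gamma}|_{\widetilde{\cX}}$ factors through the closed sub-Grassmannian $\G':=\Grass_{n}(\Omega_{\cX}^{1}(\log \cD))\subset\G$ (your limit computation $0\oplus\operatorname{im}(dg)$), over which $\mathcal{Q}$ splits as $g^*\Omega_{\cY}^{1}(\log E)\oplus\mathcal{Q}'$ with $\mathcal{Q}'$ locally free of rank $m-n$ restricting to the cotangent bundle of the fibres, and then concludes by Whitney, the projection formula, and the fibrewise Gauss--Bonnet count exactly as you do. The one step you flag as a potential obstacle --- that $\phi$ is a subbundle inclusion on all of $\widetilde{\cX}$ --- is in fact immediate, since $\widetilde{\cX}$ is the closure of the graph over $\cU\times\{0\}$ and $\G'$ is closed in $\G$, so no separate local analysis along $\cD$ is needed.
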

\begin{proof}
The restriction $\widetilde{\gamma}|_{\widetilde{\cX}}=\widetilde{\gamma}^\prime$ factors through 
$$\G^\prime:=\Grass_{n}(\Omega_{\cX}^{1}(\log \cD))\cong \Grass_{n}(\Omega_{\cX}^{1}(\log \cD)\oplus 0)\subset \G.$$
Over $\G^\prime$, $\mathcal{Q}=g^*\Omega_{\cY}^{1}(\log E)\oplus \mathcal{Q}^\prime$. So
$$c(\widetilde{\gamma}^*\mathcal{Q})\cap [\widetilde{\cX}]=c(p^*\Omega_{\cY}^{1}(\log E))\cdot c(\widetilde{\gamma}^{\prime*}\mathcal{Q}^\prime)\cap [\widetilde{\cX}].$$
Since $\mathcal{Q}^\prime$ is the universal rank-$(m-n)$ quotient bundle over $\G^\prime$, it is the cotangent bundle of the fibres over points of $\cV$.  Taking pushforward along $p$ the result follows. 
\end{proof}
Hence
\begin{equation}\label{formula:need:proof}
g_{*}(c(\Omega_{\cX}^{1}(\log \cD))\cap [\cX])-\widetilde{\chi}_{f}\cdot c(\Omega_{\cY}^{1}(\log E))\cap [\cY]=\sum_{i}r_i\cdot p_{*}(c(\mathcal{Q})\cap [\Gamma_i]).
\end{equation}

\begin{lemma}
For any component $\Gamma$ in the exceptional divisors in $\widetilde{\cX\times\PP^1}$,
$$p_{*}(c(\mathcal{Q})\cap [\Gamma])=0.$$
\end{lemma}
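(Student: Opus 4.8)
The plan is to follow MacPherson's vanishing argument for the exceptional components of the graph, adapting the local analysis to the stacky blow-up $\pi$. The statement is local over $\cY$ and over the center of the blow-up producing $\Gamma$, so I would first reduce to a situation over an \'etale chart of $\cX$ on which the indeterminacy locus of $\gamma$ --- which by construction lies in $\cD\times 0$ --- and the resolving stacky blow-up admit explicit coordinates. Here I use that a DM stack is \'etale-locally the quotient of a scheme by a finite group (Corollaire 6.11 in \cite{LMB}), so that Fulton's local picture in Example 18.1.6 of \cite{Fulton} applies after passing to such a chart, while keeping track of the finite stabilizers.

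Next I would record the structure of the exceptional divisor. Writing $\bar\rho=\rho\circ\pi$ for the composite in Diagram (\ref{graph_diagram}), so that $p=g\circ\bar\rho$, the component $\Gamma$ is a (weighted) projective bundle over its image $C:=\bar\rho(\Gamma)$, and $C$ is contained in $\cD$; the fibres of $\bar\rho|_{\Gamma}\colon \Gamma\to C$ are the weighted projective spaces created by the stacky blow-up, together with the $\PP^1$-direction contracted by $\rho$. In particular these fibres are positive-dimensional and are sent by $p$ to points of $\cY$, so $\dim C<\dim\Gamma=m$. By the projection formula it suffices to show that $\bar\rho_*\!\left(c(\widetilde\gamma^*\mathcal{Q})\cap[\Gamma]\right)=0$ in $A_*(\cX)$, after which $g_*$ of a vanishing class is again zero.

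The heart of the argument is the fibrewise vanishing, which I would compute exactly as in MacPherson's Lemma and Fulton's Example 18.1.6, using the weighted-projective fibration $\bar\rho|_{\Gamma}$ and the pushforward/Euler-class computation already established in Lemma \ref{formula_12}. The key local input is that along a fibre of $\bar\rho|_{\Gamma}$ the rescaled differential $\tfrac{1}{\lambda}\,dg$ degenerates: its graph in the Grassmannian acquires the subspace $0\oplus\operatorname{im}(dg)$, so that over $\Gamma$ the universal quotient $\widetilde\gamma^*\mathcal{Q}$ splits off a trivial summand and the surviving factor is pulled back from $C$. Consequently $c(\widetilde\gamma^*\mathcal{Q})\cap[\Gamma]$ is a pullback of a class on $C$ capped with $[\Gamma]$, and since the fibres of $\bar\rho|_{\Gamma}$ are positive-dimensional the projection formula forces $\bar\rho_*$ of this class to vanish; equivalently, the fibre integral of every graded piece of $c(\widetilde\gamma^*\mathcal{Q})$ over the weighted projective fibres is zero.

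The main obstacle I anticipate is precisely this last geometric input: verifying that MacPherson's local normal form for $\widetilde\gamma$ near the indeterminacy locus, and the resulting degeneration of $\mathcal{Q}$ on the exceptional fibres, survive the passage from ordinary blow-ups to the root-stack/weighted blow-ups of \cite{Rydh} and \cite{MM}. This requires knowing that the stacky blow-up resolves the indeterminacy of $\gamma$ with exceptional locus a weighted projective bundle carrying the same tautological data as in the scheme case. Granting that, together with the weighted-projective Euler-class computation of Lemma \ref{formula_12}, the fibre integrals vanish and the lemma follows; I would therefore spend most of the effort confirming the weighted analogue of the local structure rather than on the Chow-theoretic bookkeeping, which is formal.
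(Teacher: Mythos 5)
There is a genuine gap at the heart of your argument. The step that does all the work --- ``over $\Gamma$ the universal quotient $\widetilde{\gamma}^*\mathcal{Q}$ splits off a trivial summand and the surviving factor is pulled back from $C$'' --- is asserted, not proved, and it is not true in general. The whole point of the graph construction is that the limiting position of the graph of $\tfrac{1}{\lambda}dg$ as $\lambda\to 0$ depends on the direction of approach to the indeterminacy locus, so the restriction of the universal subbundle and quotient to an exceptional component genuinely varies along the fibres of $\Gamma$ over its image; if $\mathcal{Q}|_\Gamma$ were a pullback from the base the lemma would be immediate, but nothing forces this. What can actually be extracted is much weaker: the limiting subspaces $\mathcal{S}_\Gamma\subset \sigma^*(g^*\Omega^1_{\cY}(\log E)\oplus\Omega^1_{\cX}(\log\cD))$ project, in the first factor, into the conormal directions $N^*_{\cW}E_{\underline{s}}$ of the boundary strata of $E$ containing the image $\cW$ of $\Gamma$ in $\cY$. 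Via the residue exact sequence
$$0\to \Omega^1_{E_{\underline{s}}}|_{\cW}\to \Omega^1_{\cY}(\log E)|_{\cW}\to \cO_{\cW}^{\oplus s}\oplus(\oplus_{i>s}\cO_{\cZ\cap E_i})\to 0$$
this yields only a surjection $\mathcal{Q}_\Gamma\twoheadrightarrow p^*\mathcal{T}$, where $\mathcal{T}$ is a coherent sheaf on $\cW$ (not on the image in $\cX$) with $\rk(\mathcal{T})>\dim(\cW)$. The vanishing then follows from a rank count: after a birational $\nu\colon\widetilde{\cW}\to\cW$ making $\mathcal{T}$ locally free, the kernel of $\mathcal{Q}_\Gamma\to p^*\mathcal{T}$ has rank $<\dim(\Gamma)-\dim(\cW)$, so every graded piece of $c(\mathcal{Q}_\Gamma)$ either dies under $p_*$ for dimension reasons or involves a vanishing Chern class of the kernel (this is Lemma 6.5 of \cite{Aluffi}). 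Your proposal never engages with the logarithmic/residue structure or with the fact that $\cW\subseteq E$, which is exactly where the rank inequality comes from.

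Two further points. First, you push forward only to the image $C\subset\cD\subset\cX$; the correct dimension count must be run over the image $\cW\subset E\subset\cY$, since $\mathcal{T}$ lives on $\cW$ and $\dim(\cW)$ can be strictly smaller than $\dim(C)$ --- proving the vanishing already in $A_*(\cX)$ is a strictly stronger statement that need not hold. Second, the appeal to Lemma \ref{formula_12} is misplaced: that lemma concerns the weighted blow-up of $\overline{\cU}$ along a smooth center $\cW$ in the construction of the good local data, whereas the stacky blow-up resolving the indeterminacy of $\gamma$ has no reason to have exceptional components that are weighted projective bundles over smooth centers, so no Euler-class computation over weighted projective fibres is available here.
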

\begin{proof}
Consider the following diagram:
$$
\xymatrix{
\Gamma\ar[r]^{\widetilde{\gamma}}\ar[d]_{\sigma}\ar@/_2pc/[dd]_{p}&\G\ar[d]\\
\cZ\ar@{^{(}->}[r] \ar[d]&\cX\ar[d]^{g}\\
\cW\ar@{^{(}->}[r] &\cY
}
$$
Here $\cZ$ and $\cW$ are the images of $\Gamma$ in $\cX$ and $\cY$, respectively, and also 
$\cZ\subset \cD,  \cW\subset E$. 
We assume $\cW\subseteq E_i$ for $i\leq s$, and $\cW\nsubseteq E_i$ for $i>s$. Denote by
$$E_{\underline{s}}=E_1\cap\cdots\cap E_s.$$
Let $\mathcal{S}_{\Gamma}$ and $\mathcal{Q}_{\Gamma}$ be the pullback to $\Gamma$ of the universal subbundle and quotient bundle over 
$\G$.  Then we have the exact sequence:
$$0\to \mathcal{S}_{\Gamma}\longrightarrow \sigma^*(g^*\Omega_{\cY}^{1}(\log E)\oplus\Omega_{\cX}^{1}(\log \cD))|_{\cZ}\longrightarrow
\mathcal{Q}_{\Gamma}\to 0.$$

On the other hand, we have the residue exact sequence:
$$0\to \Omega_{E_{\underline{s}}}^{1}|_{\cW}\longrightarrow \Omega_{\cY}^{1}(\log E)|_{\cW}\longrightarrow
\cO_{\cW}^{\oplus s}\oplus (\oplus_{i>s}\cO_{\cZ\cap E_i})\to 0.$$
Consider the following commutative diagram:
\begin{equation}\label{}
\xymatrix{
&0\ar[d]&&&&\\
&N_{\cW}E_{\underline{s}}^*\ar[d]\ar[dr]&&&&\\
0\ar[r]&\Omega_{E_{\underline{s}}}^{1}|_{\cW}\ar[d]\ar[r]&\Omega_{\cY}^{1}(\log E)|_{\cW}\ar[r]\ar[d]&\cO_{\cW}^{\oplus s}\oplus (\oplus_{i>s}\cO_{\cZ\cap E_i})\ar[r]&0\\
0\ar[r]&\Omega_{\cW}^{1}\ar[r]\ar[d]&\mathcal{T}\ar[d]\ar[ur]&&\\
&0&0&&
}
\end{equation}
where the exact sequence in the middle row is the residue exact sequence and $\mathcal{T}$ is the quotient of 
$\Omega_{\cY}^{1}(\log E)|_{\cW}$ by $N_{\cW}E_{\underline{s}}^*$.  Note that 
$\rk(\mathcal{T})>\dim(\cW)$.

Hence we have the following diagram:
$$
\xymatrix{
0\ar[r]&\mathcal{S}_{\Gamma}\ar[r]^{}&\sigma^*(g^*\Omega_{\cY}^{1}(\log E)\oplus\Omega_{\cX}^{1}(\log \cD))|_{\cZ}\ar[r]\ar[d]
&
\mathcal{Q}_{\Gamma}\ar[d]\ar[r]&0\\
&p^*N_{\cW}E_{\underline{s}}^*\ar[r]^{\psi}&p^*\Omega_{\cY}^{1}(\log E)|_{\cW}\ar[r]&p^*\mathcal{T}\ar[r]&0
}
$$
where the map $\varphi$ is the first nontrivial arrow in the top row and  the image of $\varphi$ is contained in $\mbox{Im}(\psi)$. 
So the induced map 
$\mathcal{Q}_{\Gamma}\twoheadrightarrow p^*\mathcal{T}$ is surjective. 

So now we have a morphism 
$p: \Gamma\to \cW$ and a vector bundle $\mathcal{Q}_{\Gamma}$ over $\Gamma$ of rank 
$\leq \dim(\Gamma)$. Also 
$\mathcal{Q}_{\Gamma}\twoheadrightarrow p^*\mathcal{T}$ is surjective, where 
$\mathcal{T}$ is a coherent sheaf over $\cW$. 
Then a similar argument in Lemma 6.5 of \cite{Aluffi} implies that
$$p_{*}(c(\mathcal{Q}_{\Gamma})\cap [\Gamma])=0.$$
The proof uses a birational morphism $\nu: \widetilde{\cW}\to \cW$  of DM stacks such that 
$\nu^*\mathcal{T}$ is locally free on $\widetilde{\cW}$. By weighted blow-up on the locus that $\mathcal{T}$ is not 
locally free we get the DM stack $\widetilde{\cW}$. 
\end{proof}

Comparing to (\ref{formula:need:proof}),  this concludes the  proof of Lemma \ref{key_lemma}.

\subsection{Proof of Theorem \ref{natural_transformation}}
The key Lemma \ref{key_lemma} helps us to prove that 
the pushforward of Pro-CSM class for gentle morphisms, as in the proof of Theorem \ref{Covariance}. 

If $f: \cX\to \cY$ is a morphism of DM stacks, then using gentle morphisms, and letting 
$$\cX=\sqcup_{\alpha,i} \cU_{\alpha,i}, \quad \cY=\sqcup_{\alpha}\cV_{\alpha}$$
be the decomposition of $\cX$ and $\cY$ such that 
$$f|_{\cU_{\alpha,i}}: \cU_{\alpha,i}\to \cV_{\alpha}$$
is gentle.  Then by a similar argument as in Lemma 5.8 in \cite{Aluffi} we have
$$f_{*}(\mathds{1}_{\cX})=\sum_{\alpha\in A}\chi_{\alpha}\cdot \mathds{1}_{\cV_{\alpha}},$$
$$f_*(\{\cX\})=\sum_{\alpha\in A}\chi_{\alpha}\cdot \{\cV_{\alpha}\}.$$
These results are enough for proving 
$$f_{*}(\{\phi\})=\{f_{*}(\phi)\}$$
for any constructible function $\phi\in F(\cX)$, since 
by linearity of $f_*$, it suffices to prove this for characteristic function of any substacks of $\cX$.
$\square$

\section{The Pro-CSM class of Behrend function}\label{Behrend_function}

\subsection{}
Let $\cX$ be a DM stack, proper or not.  Behrend \cite{Behrend} introduces a constructible function 
$\nu_{\cX}$, which is defined as follows:
There is a canonical integral cycle $\mathfrak{c}_{\cX}\in Z_*(\cX)$, such that 
if \'etale locally (or Zariski locally if $\cX$ is a scheme) there is an open chart 
$U\to \cX$ and an embedding 
$U\hookrightarrow M$ into a smooth DM stack $M$, 
$$\mathfrak{c}_{\cX}|_{U}=\sum_{i}\mult(C_i)(-1)^{\dim(\pi(C_i))}[\pi(C_i)],$$
where $C_i$ is the irreducible component of the normal cone 
$C_{U/M}$ and 
$\pi: C_{U/M}\to U$ is the projection; $\mult(C_i)$ is the multiplicity of $C_i$ at generic point.

\begin{definition}(Behrend)
$$\nu_{\cX}=\eu(\mathfrak{c}_{\cX}),$$
where $\eu$ is the local Euler obstruction of MacPherson in Section 1.2 of \cite{Behrend}, which generalizes the local Euler obstruction in the case of varieties  in Section 3 of \cite{MacPherson}. 
\end{definition}

The Behrend function $\nu_{\cX}$ is a constructible function on $\cX$. From Section \ref{Pro_Chow_class} the Pro-Chow class of $\nu_\cX$ is well-defined. 
\begin{definition}
The Pro-CSM class for $\nu_{\cX}$ is defined by:
$$c^{PSM}(\nu_{\cX})=\{\nu_{\cX}\}\in \widehat{A}_*(\cX).$$
\end{definition}
\begin{remark}\label{remark:pro-chow:nuX}
By choosing a good stratification for $\cX$, we may write 
$$\nu_{\cX}=\sum_{\cZ}a_{\cZ}\cdot \mathds{1}_{\cZ},$$
i.e. the Euler obstruction of $\mathfrak{c}_{\cX}$ is constant on $\cZ$. 
Then $$\{\nu_{\cX}\}=\sum_{\cZ}a_{\cZ}\cdot i_{\cZ*}\{\cZ\}.$$
Taking degree we get 
$$\int_{\cX}\{\nu_{\cX}\}=\chi(\cX, \nu_{\cX}),$$
which is the weighted Euler characteristic of $\cX$. 
\end{remark}

\subsection{}
From  this section  we assume that $\cX$ is proper, then the Pro-Chow group of $\cX$ is just the general Vistoli Chow group of 
$\cX$.  We introduce Chern-Mather class for $\cX$. 

Let $Z\subset \cX$ be a prime cycle.  The Nash blow-up $\mu: \widehat{Z}\to Z$ and the Nash tangent bundle $TZ$  are introduced by MacPherson  in Section 2 of \cite{MacPherson} if $\cX$ is a scheme; and by Behrend in Section 1.2 and Section 1.4 of \cite{Behrend} if 
$\cX$ is a DM stack. 

\begin{definition}
The Chern-Mather class $c^{M}(Z)$ is:
$$c^{M}([Z])=\mu_{*}(c(TZ)\cap [\widehat{Z}]),$$
where $c^{M}([Z])\in A_*(\cX)$. 
\end{definition}
As in Behrend \cite{Behrend}, let  $c_0^{M}(Z)$ be the degree zero part $c_0^{M}: Z_*(\cX)\to A_{0}(\cX)$. 
The Euler obstruction $\eu(Z)$ is a constructible function $\cX\to \ZZ$. The weighted Euler characteristic $\chi(\cX, \eu(Z))$ is given by
$$\sum_in_i\cdot \chi(\eu(Z)^{-1}(i)).$$
Our cycle $\mathfrak{c}_{\cX}$ is a linear  combination of prime cycles in $\cX$. We define 
$$\alpha_{\cX}=c^{M}(\mathfrak{c}_{\cX})\in A_*(\cX)$$
to be the Chern-Mather class of the canonical cycle $\mathfrak{c}_{\cX}$, which Behrend calls the \textbf{Aluffi class}.

\subsection{}\label{Lagrangian_intersection}
We give an explanation of the weighted Euler characteristic in terms of Lagrangian intersections. 
We fix an embedding $\cX\to \cM$ of the DM stack $\cX$ into a smooth DM stack $\cM$ of dimension $n$. The following commutative diagram is due to Behrend,  
Diagram (2) of \cite{Behrend}.
\begin{equation}\label{Key_Diagram}
\xymatrix{
Z_*(\cX)\ar[r]^{\eu}_{\cong}\ar[dr]_{c^{M}_0}&F(\cX)\ar[r]^{\mbox{Ch}}_{\cong}\ar[d]^{c_0^{SM}}&\mathfrak{L}_{\cX}(\Omega_{\cM})\ar[dl]^{I(\cdot, [\cM])}\\
&A_0(\cX)&
}
\end{equation}
where $Z_*(\cX)$ is the group of integral cycles of $\cX$, $F(\cX)$ is the group of constructible functions on $\cX$, and $\mathfrak{L}_X(\Omega_M)$ is the subgroup of $Z_n(\Omega_{\cM})$ generated by the conic Lagrangian prime cycles supported on $\cX$. 
The maps $c^{M}_0$, $c_0^{SM}$ and $I(\cdot, [\cM])$ represent the degree zero Chern-Mather class, degree zero CSM class and the Lagrangian intersection with zero section of $\Omega_{\cM}$, respectively.  Note that in \cite{Behrend}, the notation of Lagrangian intersection with zero section is denoted by $0^{!}_{\Omega_{\cM}}(\cdot)$. 

We briefly explain the horizontal morphisms in the diagram.  The first map is the Euler obstruction $\eu$ and it gives an isomorphism from $Z_*(\cX)$ to $F(\cX)$. 

In Section 4.1 of \cite{Behrend} Behrend defined the following isomorphism of groups:
\begin{equation}\label{cycle_Lagrangian}
L:  Z_{*}(\cX) \to \mathfrak{L}_{\cX}(\Omega_{\cM})
\end{equation}
which is given by
$$Z\mapsto (-1)^{\dim(Z)}N^*_{Z/\cM},$$
where $N^*_{Z/\cM}$ is the closure of the conormal bundle of smooth part of $Z$ inside $\cM$. 
Conversely there is an isomorphism:
\begin{equation}\label{Lagrangian_cycle}
\pi:  \mathfrak{L}_{\cX}(\Omega_{\cM})\to Z_{*}(\cX) 
\end{equation}
which is given by
$$V\mapsto (-1)^{\pi(V)}\pi(V),$$
where $\pi: V\to \cX$ is the projection.  
Then the morphism $\mbox{Ch}$ is defined by the isomorphism $\eu$ and the morphism $L$ defined above.

\subsection{}
We prove the main theorem in this section. 
In the case that $\cX$ is a proper scheme, the integration $\int_{\cX}\alpha_{\cX}$ is the weighted Euler characteristic 
$\chi(\cX, \nu_{\cX})$, which is MacPherson's index theorem in \cite{MacPherson}.  
Note that only degree zero of $\alpha_{\cX}$ contributes. 

\begin{theorem}\label{main1}
Let $\cX$ be a proper DM stack. Then we have
$$\int_{\cX}\alpha_{\cX}=\chi(\cX, \nu_{\cX})$$
\end{theorem}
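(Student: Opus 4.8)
The plan is to reduce the statement to two facts already in place: the degree formula $\int_{\cX}\{\nu_{\cX}\}=\chi(\cX,\nu_{\cX})$ from Remark \ref{remark:pro-chow:nuX}, and the commutativity of Behrend's diagram (\ref{Key_Diagram}). Since $\cX$ is proper, the Pro-Chow group $\widehat{A}_*(\cX)$ is just the Vistoli Chow group $A_*(\cX)$, so both $\alpha_{\cX}=c^M(\mathfrak{c}_{\cX})$ and the Pro-CSM class $\{\nu_{\cX}\}$ live in $A_*(\cX)$, and the integration $\int_{\cX}$ is the ordinary proper pushforward to $\Spec\CC$, which extracts the degree of the zero-dimensional component. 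It therefore suffices to show that $\int_{\cX}\alpha_{\cX}=\int_{\cX}\{\nu_{\cX}\}$.

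First I would isolate the contributing part of the Aluffi class. Only the degree-zero part of $\alpha_{\cX}=c^M(\mathfrak{c}_{\cX})$ survives integration over the proper stack $\cX$, so $\int_{\cX}\alpha_{\cX}=\deg c^M_0(\mathfrak{c}_{\cX})$. Commutativity of the lower triangle in Diagram (\ref{Key_Diagram}) gives $c^M_0=c^{SM}_0\circ\eu$; since $\nu_{\cX}=\eu(\mathfrak{c}_{\cX})$ by definition of the Behrend function, this yields $c^M_0(\mathfrak{c}_{\cX})=c^{SM}_0(\nu_{\cX})$, and hence $\int_{\cX}\alpha_{\cX}=\deg c^{SM}_0(\nu_{\cX})$.

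The remaining step, and the crux of the argument, is to identify the Pro-CSM class of the Behrend function with Behrend's CSM class, i.e. $\{\nu_{\cX}\}=c^{SM}(\nu_{\cX})$, so that their degree-zero parts agree. Both $\phi\mapsto\{\phi\}$, whose covariance and naturality are Theorem \ref{Covariance} and Theorem \ref{natural_transformation}, and the MacPherson--Behrend transformation $c^{SM}$ (whose degree-zero part $c^{SM}_0$ appears in the diagram) are covariant natural transformations from constructible functions to Chow groups, and both are normalized so that $\mathds{1}_{\cX}\mapsto c(T\cX)\cap[\cX]$ for smooth proper $\cX$. By the uniqueness of MacPherson's natural transformation under this normalization, the two coincide on all of $F(\cX)$; for a proper DM stack this uniqueness descends from MacPherson's theorem on the quasi-projective coarse space through the isomorphism $A_*(\cX)\cong A_*(X)$, using that $\nu_{\cX}$ and $\eu$ are compatible with passage to the coarse moduli space. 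Applying this to $\phi=\nu_{\cX}$ and taking degree-zero parts gives $\int_{\cX}\{\nu_{\cX}\}=\deg c^{SM}_0(\nu_{\cX})$.

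Stringing these together, $\int_{\cX}\alpha_{\cX}=\deg c^M_0(\mathfrak{c}_{\cX})=\deg c^{SM}_0(\nu_{\cX})=\int_{\cX}\{\nu_{\cX}\}=\chi(\cX,\nu_{\cX})$, which is the claim. The main obstacle is exactly the identification $\{\nu_{\cX}\}=c^{SM}(\nu_{\cX})$: one must ensure that the normalization on smooth proper stacks together with covariant functoriality pins the transformation down uniquely, and that this uniqueness is not disturbed by the stacky structure entering through the root constructions and weighted blow-ups used to build $\{\cdot\}$. Once that identification is granted, the degree formula of Remark \ref{remark:pro-chow:nuX} and the diagram chase are purely formal.
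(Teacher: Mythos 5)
Your reduction agrees with the paper's: both arguments boil down to showing $(\alpha_{\cX})_0=\{\nu_{\cX}\}_0$ in $A_0(\cX)$, after which Remark \ref{remark:pro-chow:nuX} finishes the proof. The problem is the step you yourself flag as the crux, namely the identification $\{\nu_{\cX}\}=c^{SM}(\nu_{\cX})$ via uniqueness of MacPherson's natural transformation. To invoke that uniqueness you must already know that Behrend's $c^{SM}=c^{M}\circ\eu^{-1}$ is itself a covariant natural transformation on the category of proper DM stacks with the stated normalization. For varieties this is MacPherson's theorem; for DM stacks it is not available in the paper or in \cite{Behrend} --- indeed, naturality of $c^{SM}_0$ with respect to the single morphism $\cX\to\Spec\CC$ already yields $\int_{\cX}c^{SM}_0(\nu_{\cX})=\chi(\cX,\nu_{\cX})$, which is precisely the statement being proved. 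So the appeal to uniqueness is circular.

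Your proposed escape --- descending to the coarse moduli space through $A_*(\cX)\cong A_*(X)$ ``using that $\nu_{\cX}$ and $\eu$ are compatible with passage to the coarse moduli space'' --- does not work: the Nash blow-up, the Chern--Mather class and the weighted Euler characteristic of a stack are not those of its coarse space (already $\chi(BG)=1/|G|\neq 1=\chi(\mathrm{pt})$), so MacPherson's theorem on the quasi-projective coarse space $X$ says nothing directly about $\cX$. This is exactly why the statement was a conjecture and why the paper (and Maulik--Treumann) must do genuine work at this point. The paper's proof instead establishes $c^{M}_0(V)=\{\eu(V)\}_0$ directly for each prime cycle $V$ occurring in $\mathfrak{c}_{\cX}$: it identifies $c^{M}_0(V)$ with the Lagrangian intersection $I((-1)^{\dim V}N^*_{V/\cM},[\cM])$ via Kennedy's lemma and Behrend's Proposition 4.6, decomposes the conormal cycle into characteristic cycles of smooth compact strata $S_\alpha$ following Ginzburg, and observes that for such strata the Lagrangian intersection equals $i_{\alpha*}\{S_\alpha\}_0$. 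That concrete computation is what replaces --- and what your argument would need in place of --- the uniqueness claim.
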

\begin{proof}
In Definition \ref{ProChow_constructible_function}  of Section \ref{Pro_Chow_class} we define the Pro-CSM class $\{\nu_\cX\}$ for the Behrend function 
$\nu_{\cX}$. By Remark \ref{remark:pro-chow:nuX}, taking integration 
$\int_{\cX}\{\nu_{\cX}\}=\chi(\cX, \nu_{\cX})$, we get the weighted Euler characteristic. 
Actually to get the number only degree zero part $\{\nu_{\cX}\}_{0}\in A_0(\cX)$ contributes. 
So it suffices to prove that the Chow classes 
 $\{\nu_{\cX}\}_0=(\alpha_{\cX})_0$ in $A_{0}(\cX)$.
 
 Recall that our DM stack $\cX$ is quasi-projective, meaning that there is a locally closed embedding to a smooth DM stack 
 $\cM$ with projective coarse moduli space.  Then the cycle 
 $$\mathfrak{c}_{\cX}=\sum_{C_i}(-1)^{\mult(C_i)}\mult(C_i)[\pi(C_i)],$$
 where $$\pi: C:=C_{\cX/\cM}\to \cX$$
 is the projection from the normal cone to $\cX$, $C_i$ are all the irreducible components of $C$, and $[\pi(C_i)]$ is a prime cycle in 
 $Z_*(\cX)$. 
 Since there are finite number of irreducible components of $C$, the cycle $\mathfrak{c}_{\cX}$ is a linear combination of finite prime cycles in $Z_*(\cX)$.  Let $V:=[\pi(C_i)]$ be one such prime cycle.  Recall that 
$\nu_{\cX}=\eu(\mathfrak{c}_{\cX})$ and $\alpha_{\cX}=c^{M}(\mathfrak{c}_{\cX})$. So to prove that 
$(\alpha_{\cX})_0=\{\nu_{\cX}\}_0\in A_0(\cX)$, it suffices to prove that 
$c^M_0(V)=\{\eu(V)\}_0\in A_0(\cX)$. 

According to Lemma 1 in \cite{Kennedy}, Chern-Mather class $c^M(V)$ is the class coming from conormal cycle 
$N^*_{V/\cM}\subset \Omega_{\cM}|_{\cX}$.  Let $\dim(V)=p$ and 
$\mu: \widetilde{\cM}\to \cM$ be the Grassmannian of rank $p$ quotient of $\Omega_{\cM}$ and 
$$\mu: \hat{V}\to V$$
the closure inside $\widetilde{\cM}$ of the canonical rational section $V\dasharrow \widetilde{\cM}$. 
This is exactly the construction of Nash blow-up $\hat{V}$ of $V$.  Hence we have an exact sequence on $\hat{V}$:
$$0\rightarrow N|_{\hat{V}}\longrightarrow \mu^*\Omega_{\cM}|_{\hat{V}}\longrightarrow Q|_{\hat{V}}\rightarrow 0,$$
where $Q$ is the universal quotient bundle on $\widetilde{\cM}$, and $N$ is the kernel of the universal quotient map
$\mu^*\Omega_{\cM}\to Q$. Restricting to $\hat{V}$ we have the above exact sequence. 
From Proposition 4.6 in \cite{Behrend}, there is a cartesion diagram:
\[
\xymatrix{
\hat{V}\ar[r]^{\mu}\ar[d]_{0}& V\ar[r]\ar[d]_{0}& \cM\ar[d]_{0}\\
N|_{\hat{V}}\ar[r]^{\eta}& N_{V/\cM}^*\ar[r]& \Omega_{\cM},
}
\]
where $\eta: N|_{\hat{V}}\to N_{V/\cM}^*$ is a proper birational map of integral stacks. 
Hence from the commutative diagram (\ref{Key_Diagram}), 
$\Ch(\eu(V))=(-1)^{p}N_{V/\cM}^*$ and we have 
$$c_0^M(V)= I((-1)^{p}N_{V/\cM}^*, [\cM]).$$
 
On the other hand, from Section 8 in  \cite{Ginzburg} let $V=\cup_{\alpha}S_\alpha$ be a stratification  of $V$, where $S_\alpha$ is nonsingular and the local Euler obstruction $\eu(V)$ is constant on $S_\alpha$.  
Then 
$$(-1)^{p}N_{V/\cM}^*=\sum_{\alpha}n_{\alpha}\cdot (-1)^{\dim(S_{\alpha})}N_{S_{\alpha}/\cM}^*$$
is a linear combination of characteristic cycles of $S_{\alpha}$. 
Then the Lagrangian intersections are equal, i.e. 
$$I((-1)^{p}N_{V/\cM}^*, [\cM])=\sum_{\alpha}n_{\alpha}\cdot I((-1)^{\dim(S_{\alpha})}N_{S_{\alpha}/\cM}^*, [\cM]).$$
From Diagram (\ref{Key_Diagram}) and Section \ref{Lagrangian_intersection},  $\eu, \mbox{Ch}$ and $L$ are all isomorphisms.  Hence the local Euler obstruction $\eu(V)$ is actually 
$\eu(V)=\sum_{\alpha}n_{\alpha}\eu(S_{\alpha})$. 
The strata $S_{\alpha}$ are nonsingular, and we can take stratification such that $S_{\alpha}$ are also compact, then 
$\sum_{\alpha}n_{\alpha}\cdot I((-1)^{\dim(S_{\alpha})}N_{S_{\alpha}/\cM}^*, [\cM])$ is the same as 
$$\sum_{\alpha}n_{\alpha}i_{\alpha*}(\{S_{\alpha}\}_0)=\{\eu(V)\}_0 \in A_0(\cX),$$
where $i_{\alpha}: S_{\alpha}\hookrightarrow V$ is the inclusion. 
So $c^M_0(V)=\{\eu(V)\}_0 \in A_0(\cX)$ and $(\alpha_{\cX})_0=\{\nu_{\cX}\}_0\in A_0(\cX)$. The theorem is proved.  
\end{proof}
\begin{remark}
From Diagram (\ref{Key_Diagram}),  the Chern-Mather class $\alpha_{\cX}$ of the canonical cycle $\mathfrak{c}_{\cX}$ is the same as the Chern-Schwartz-MacPherson class of the  Behrend function $\nu_{\cX}$.  
One may generalize the construction of the natural transformation of functors  in \cite{MacPherson} 
from category of varieties to DM stacks.  
Then the result  $\alpha_{\cX}=c^{PSM}(\nu_X)$ comes from the naturality of the transformation $F$ of functors in Theorem \ref{natural_transformation}, since $c^{PSM}(\nu_X)$ and $\alpha_X$ all satisfy the pushforward properties. 
\end{remark}

\begin{remark}
Theorem 5.5 was conjectured by Behrend in Remark 1.13 of \cite{Behrend}. 
\end{remark}

Finally we present Theorem \ref{main1} as Lagrangian intersection:

\begin{theorem}
We have:
$$\int_{\cX}\alpha_{\cX}=\chi(\cX, \nu_{\cX})=I(\Ch(\nu_{\cX}), [\cM]),$$
where $I(\Ch(\nu_{\cX}), [\cM])$ means the Lagrangian intersection number. 
\end{theorem}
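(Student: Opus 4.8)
The plan is to obtain both equalities almost immediately from results already established. The first equality $\int_{\cX}\alpha_{\cX}=\chi(\cX,\nu_{\cX})$ is precisely Theorem \ref{main1}, so nothing new is needed there; I would simply cite it. The only genuinely new content is the second equality, which expresses this number as a Lagrangian intersection, and for this I would read it off the right-hand triangle of Behrend's Diagram (\ref{Key_Diagram}).

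That triangle asserts, now in the DM-stack setting, that the degree-zero CSM map factors as $c_0^{SM}=I(\cdot,[\cM])\circ\Ch$. Applying this to the Behrend function $\nu_{\cX}\in F(\cX)$ gives
$$c_0^{SM}(\nu_{\cX})=I(\Ch(\nu_{\cX}),[\cM])\in A_0(\cX).$$
On the other hand, the remark following Theorem \ref{main1} identifies $\alpha_{\cX}=c^{PSM}(\nu_{\cX})$, whose degree-zero component is exactly $c_0^{SM}(\nu_{\cX})$ since for proper $\cX$ the Pro-CSM class coincides with the ordinary CSM class. Hence $(\alpha_{\cX})_0=I(\Ch(\nu_{\cX}),[\cM])$ in $A_0(\cX)$. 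Because only the degree-zero part of $\alpha_{\cX}$ contributes to the integral over $\cX$, taking degrees yields $\int_{\cX}\alpha_{\cX}=I(\Ch(\nu_{\cX}),[\cM])$, the stated Lagrangian intersection number.

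The real content, namely commutativity of this triangle for stacks, has in effect already been verified inside the proof of Theorem \ref{main1}. There I would recall that for each prime cycle $V=[\pi(C_i)]$ appearing in $\mathfrak{c}_{\cX}$, the cartesian square of Proposition 4.6 in \cite{Behrend} together with the isomorphisms $\eu,\Ch,L$ of Section \ref{Lagrangian_intersection} gives $c_0^M(V)=I((-1)^{\dim V}N^*_{V/\cM},[\cM])=I(\Ch(\eu(V)),[\cM])$. Extending by linearity over the finitely many irreducible components of the normal cone $C_{\cX/\cM}$ and summing over these prime cycles then produces $(\alpha_{\cX})_0=I(\Ch(\nu_{\cX}),[\cM])$ directly, without invoking the diagram as a black box.

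The main obstacle is bookkeeping rather than new geometry. I must confirm that the characteristic-cycle map $\Ch$ and the Lagrangian intersection $I(\cdot,[\cM])$ are compatible with the chosen good stratification $V=\bigcup_{\alpha}S_{\alpha}$, so that $\Ch(\eu(V))=\sum_{\alpha}n_{\alpha}(-1)^{\dim S_{\alpha}}N^*_{S_{\alpha}/\cM}$ and the corresponding intersection numbers add, and that on the proper stack $\cX$ the degree map $A_0(\cX)\to\QQ$ is exactly the one implicit in the definition of the Lagrangian intersection number. Both points follow from the isomorphism property of $\eu$, $\Ch$, $L$ and from properness of $\cX$, so no argument beyond the one already carried out for Theorem \ref{main1} is required.
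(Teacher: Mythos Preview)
Your treatment of the first equality matches the paper exactly: both simply invoke Theorem \ref{main1}.

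For the second equality the two arguments diverge. The paper does not re-use Diagram (\ref{Key_Diagram}) or the computations from the proof of Theorem \ref{main1} at all; it cites Theorem 4.5 and Theorem 5.3 of Maulik--Treumann \cite{MT} as a black box establishing $\chi(\cX,\nu_{\cX})=I(\Ch(\nu_{\cX}),[\cM])$. Your route is instead internal: you observe that the identity $c_0^M(V)=I\big((-1)^{\dim V}N^*_{V/\cM},[\cM]\big)=I(\Ch(\eu(V)),[\cM])$ in $A_0(\cX)$ was already obtained inside the proof of Theorem \ref{main1} (via Proposition 4.6 of \cite{Behrend}), sum linearly over the prime cycles of $\mathfrak{c}_{\cX}$ to get $(\alpha_{\cX})_0=I(\Ch(\nu_{\cX}),[\cM])$, and then take degrees. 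This is correct and has the advantage of being self-contained, avoiding any appeal to \cite{MT}; the paper's version has the advantage of brevity and of pointing the reader to the general Kashiwara-type index theorem for orbifolds. One small caution: your argument leans on the commutativity of the right-hand triangle of Diagram (\ref{Key_Diagram}) in the DM-stack setting, which the paper states but attributes to \cite{Behrend} rather than proving; your second paragraph, re-deriving the needed instance of that commutativity directly from the computations in Theorem \ref{main1}, is therefore the part that actually carries the weight, and you are right to include it.
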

\begin{proof}
From Diagram (\ref{Key_Diagram}), $\Ch(\nu_{\cX})\subset \Omega_{\cM}$ is the characteristic cycle of 
$\nu_{\cX}$, which conic Lagrangian. 
The first equality is Theorem \ref{main1} and the second follows from  Theorem 4.5 and Theorem 5.3 in \cite{MT}. 
\end{proof}

\bibliographystyle{amsplain}
\begin{bibdiv}
  \begin{biblist}
  \bib{AKMW}{article}{
      author={Abramovich, Dan},
      author={Karu, Kalle},
      author={Matsuki, Kenji},
      author={Wlodarczyk, Jaroslaw},
      title={Torification and factorization of birational maps},
      journal={J. Amer. Math. Soc.},
      volume={15},
      date={2002},
      number={3},
      pages={531-572},
      issn={0002-9327},
      }

\bib{Aluffi}{article}{
      author={Aluffi, Paolo},
      title={Limits of Chow groups, and a new construction of Chern-Schwartz-MacPherson classes},
      journal={Pure Appl. Math. Q.},
      volume={2},
      date={2006},
      number={4},
      pages={915-941},
      issn={0002-9327},
      }

\bib{Aluffi2}{article}{
      author={Aluffi, Paolo},
      title={Modification systems and integration in their Chow groups},
      journal={Selecta Math.},
      volume={11},
      date={2005},
      number={2},
      pages={155-202},
      issn={0002-9327},
      }

\bib{Behrend}{article}{
      author={Behrend, Kai},
      title={Donaldson-Thomas type invariant via microlocal geometry},
      journal={Ann. Math.},
      volume={170},
      date={2009},
      number={3},
      pages={1307-1338},
      issn={0002-9327},
      }      
               
   \bib{Behrend2}{article}{
      author={Behrend, Kai},
      title={Cohomology of stacks},
      journal={In Intersection Theory and Moduli, volume 19 of ICTP Lecture Notes Series, pages 249-294. ICTP, Trieste, 2004.},
      volume={},
      date={},
      number={},
      pages={},
      issn={0002-9327},
      }

   \bib{Bergh}{article}{
      author={Bergh, Daniel},
      title={Functorial destackification of tame stacks with abelian stabilizers},
      journal={preprint},
      volume={},
      date={},
      number={},
      pages={arXiv:1409.5713},
      issn={0002-9327},
    }
    
    \bib{BerghRydh}{article}{
      author={Bergh, Daniel},
      author={Rydh, David},
      title={Functorial destackification and weak factorization of orbifolds},
      journal={preprint},
      volume={},
      date={},
      number={},
      pages={},
      issn={0002-9327},
    }

   \bib{Cadman}{article}{
      author={Cadman, Charles},
      title={Using stacks to impose tangency conditions on curves},
      journal={Amer. J. Math.},
      volume={129},
      date={2007},
      number={2},
      pages={405-427},
      issn={0002-9327},
      }

  \bib{JThomas}{article}{
      author={Jiang, Yunfeng},
      author={Thomas, Richard P},
      title={Virtual signed Euler characteristics},
      journal={preprint},
      volume={},
      date={},
      number={},
      pages={arXiv:1408.2541},
      issn={},
      }

      \bib{Fulton}{article}{
      author={Fulton, Williami},
      title={Intersection theory},
      journal={2nd ed., Ergebnisse der Mathematik und ihrer Grenzgebiete. 3. Folge},
      volume={},
      date={1998},
      number={ },
      pages={A Series of Modern Surveys in Mathematics [Results in Mathematics and Related Areas. 3rd Series. A Series of Modern Surveys in Mathematics], vol. 2, Springer-Verlag, Berlin, 1998},
      issn={},
      review={},
         }

       \bib{Ginzburg}{article}{
      author={Ginzburg,  Victor},
      title={Characterisitc varieties and vanishing cycles},
      journal={Invent. Math.},
      volume={84},
      date={1986},
      number={},
      pages={327-402},
      issn={0002-9327},
         }

     \bib{KS}{article}{
      author={Kashiwara,  Masaki},
      author={Schapira, Pierre},
      title={Sheaves on manifolds},
      journal={Grundlehren der Mathematischen Wissenschaften.},
      volume={292},
      date={Springer-Verlag, Berlin, 1990},
      number={},
      pages={},
      issn={0002-9327},
    }

 \bib{Kennedy}{article}{
      author={Kennedy, Gary},
      title={Macpherson's chern classes of singular algebraic varieties},
      journal={Communications in Algebra},
      volume={18:9},
      date={1990},
      number={},
      pages={2821-2839},
      issn={},
    }

    \bib{KKP}{article}{
      author={Kim,  Bumsig},
      author={Kresch, Andrew},
      author={Pantev, Tony},
      title={Functoriality in intersection theory and a conjecture of Cox, Katz, and Lee},
      journal={J. Pure Appl. Algebra},
      volume={179},
      date={2003},
      number={},
      pages={127-136},
      issn={0002-9327},
    }

     \bib{Kresch09}{article}{
      author={Kresch, Andrew},
      title={On the geometry of Deligne-Mumford stacks},
      journal={Algebraic Geometry-Seattle 2005. Part 1, Proc. Sympos. Pure Math.},
      volume={80},
      date={Amer. Math. Soc., Providence, RI, 2009},
      number={},
      pages={259-271},
      issn={},
    }

    \bib{LMB}{article}{
      author={Laumon, G\'erard},
      author={Moret-Bailly, Laurent},
      title={Champs algebriques},
      journal={volume 39 of Ergebnisse der Mathematik und ihrer Grenzgebiete. 3. Folge. Springer-Verlag, Berlin, 2000.},
      volume={},
      date={},
      number={},
      pages={},
      issn={},
    }

\bib{MM}{article}{
      author={Mustata, Anca},
      author={Mustata, Andrei},
      title={The structure of a local embedding and Chern classes of weighted blow-ups},
      journal={J. Eur. Maths. Soc., to appear},
      volume={},
      date={},
      number={},
      pages={ arXiv:0812.3101},
      issn={0002-9327},
    }

    \bib{MacPherson}{article}{
      author={MacPherson, Robert},
      title={Chern class for singular algebraic varieties},
      journal={Ann. Math.},
      volume={100},
      date={1974},
      number={2},
      pages={423-432},
      issn={0002-9327},
    }

\bib{MT}{article}{
      author={Maulik, Devash},
      author={Treumann, David},
      title={Constructible functions and Lagrangian cycles on orbifolds},
      journal={},
      volume={},
      date={},
      number={},
      pages={arXiv:1110.3866},
      issn={0002-9327},
    }

\bib{Nagata}{article}{
      author={Nagata, Masayoshi},
      title={A generalization of the imbedding problem of an abstract variety in a complete variety},
      journal={J. Math. Kyoto Univ.},
      volume={3},
      date={1963},
      number={89-102},
      pages={arXiv:1110.3866},
      issn={0002-9327},
      }

 \bib{Rydh}{article}{
      author={Rydh, David},
      title={Compactification of tame Deligne-Mumford stacks},
      journal={preprint},
      volume={},
      date={},
      number={},
      pages={},
      issn={0002-9327},
    }

 \bib{Thomas}{article}{
      author={Thomas, Richard P},
      title={A holomorphic Casson invariant for Calabi-Yau 3-folds,
                      and bundles on K3 fibrations},
      journal={J. Differential Geom.},
      volume={54},
      date={2000},
      number={},
      pages={367-438},
      issn={0002-9327},
    }

     \bib{Vistoli}{article}{
      author={Vistoli, Angela},
      title={Intersection theory on algebraic stacks and on their moduli spaces},
      journal={ Invent. Math.},
      volume={97},
      date={1989},
      number={3},
      pages={613-670},
      issn={0002-9327},
    }

  \end{biblist}
\end{bibdiv}

\end{document}